\newcommand{\cO}{\mathcal{O}}
\newcommand{\h}{\mathfrak{h}}
\newcommand{\p}{\mathfrak{p}}
\newcommand{\cL}{\mathcal{L}}
\newcommand{\cV}{\mathcal{V}}
\newcommand{\cX}{\mathcal{X}}
\newcommand{\ds}{\displaystyle}
\newcommand{\cE}{\mathcal{E}}
\newcommand{\ra}{\rightarrow}
\newcommand{\xra}[1]{\xrightarrow{#1}}
\newcommand{\hra}{\hookrightarrow}
\newcommand{\mtwo}[4]{\left(
        \begin{matrix}#1&#2\\#3&#4
        \end{matrix}\right)}
\newcommand{\vphi}{\varphi}
\newcommand{\Zhat}{\widehat{\Z}}
\newcommand{\comment}[1]{}
\newcommand{\Q}{\mathbb{Q}}
\newcommand{\cC}{\mathcal{C}}
\newcommand{\C}{\mathbb{C}}
\newcommand{\Qbar}{\overline{\Q}}
\newcommand{\Z}{\mathbb{Z}}
\newcommand{\F}{\mathbb{F}}
\newcommand{\Fbar}{\overline{\F}}
\newcommand{\NN}{\mathbb{N}}
\newcommand{\n}{\mathfrak n}
\newcommand{\isom}{\cong}
\newcommand{\Af}{\mathbb{A}_f}
\newcommand{\bc}{\mathfrak{c}}
\newcommand{\f}{\textrm{f}}
\newcommand{\g}{\textrm{g}}
\newcommand{\lambar}{\overline{\lambda}}
\renewcommand{\H}{\HH}
\newcommand{\cI}{\mathcal I}
\DeclareMathOperator{\chr}{char}
\DeclareMathOperator{\alg}{alg}
\DeclareMathOperator{\lcm}{lcm}
\DeclareMathOperator{\tors}{tors}
\DeclareMathOperator{\Pic}{Pic}
\DeclareMathOperator{\ab}{ab}
\DeclareMathOperator{\Aut}{Aut}
\DeclareMathOperator{\ord}{ord}
\DeclareMathOperator{\GL}{GL}
\DeclareMathOperator{\PGL}{PGL}
\DeclareMathOperator{\Gal}{Gal}
\DeclareMathOperator{\SL}{SL}
\DeclareMathOperator{\HH}{H}
\DeclareMathOperator{\Ram}{Ram}
\DeclareMathOperator{\Res}{Res}
\DeclareMathOperator{\red}{red}
\DeclareMathOperator{\nr}{nr}
\DeclareMathOperator{\Stab}{Stab}
\DeclareMathOperator{\End}{End}
\DeclareMathOperator{\Hom}{Hom}
\DeclareMathOperator{\rec}{rec}
\DeclareMathOperator{\dist}{dist}
\DeclareMathOperator{\Lie}{Lie}
\DeclareMathOperator{\Cl}{Cl}
\DeclareMathOperator{\RED}{RED}
\DeclareMathOperator{\Spec}{Spec}
\theoremstyle{plain} 
\newtheorem{thm}{Theorem}[section] 
\newtheorem{cor}[thm]{Corollary}
\newtheorem{lem}[thm]{Lemma}
\theoremstyle{definition} 
\theoremstyle{remark} 
\newtheorem{rem}{Remark}
\newcommand{\authnote}[2][]{\noindent {\if!#1!  {\bf TODO} \else {\small \bf #1} \fi: #2} \vspace{0.1in}}
\newcounter{tasknumber}
\newcommand{\task}[2][]{%
  \addtocounter{tasknumber}{1}%
  \begin{center}%
  \framebox[1.1\width]{\begin{minipage}{0.9\textwidth}%
  \textbf{Task \arabic{tasknumber}} \textit{\if!#1!(unassigned)\else (#1)\fi}: {#2}%
  \end{minipage}}%
  \end{center}%
}
\begin{abstract}
We construct liftings of reduction maps from CM points to supersingular points for general quaternion algebras and use these liftings to establish a precise correspondence between CM points on indefinite quaternion algebras with a given conductor and CM points on certain corresponding totally definite quaternion algebras. 
\end{abstract}
\begin{document}


\title[Reduction maps]
{Liftings of Reduction Maps for Quaternion Algebras}
\author{Christophe Cornut}
\email{cornut@math.jussieu.fr}
\address{Institut de Math\'ematiques de Jussieu, 175 rue du Chevaleret, 75013 Paris, France}

\author{Dimitar Jetchev}
\email{dimitar.jetchev@epfl.ch}
\address{Ecole Polytechnique F\'ed\'erale de Lausanne\\Lausanne\\Switzerland}
%
\classification{}
\keywords{}
\thanks{}

\maketitle


%
%
\section{Introduction}

Let $F$ be a totally real number field and let $B$ be a quaternion algebra over $F$. Let $G = \Res_{F/\Q}B^\times$ be the algebraic group over $\Q$ associated to $B^\times$. Let $\mathbb{A}_f$ be the finite adeles of $\Q$. 

For an open subgroup $H \subset G(\Af)$ that is compact modulo the center let 
\[
\mathcal{X}(G,H) =G(\Q)\backslash G(\Af)/H.
\]
This is a finite set. In the case when $H=\widehat{R}^{\times}$ for
some $\mathcal{O}_{F}$-order $R$ in $B$, the map $\widehat{b}\mapsto(\widehat{b}\cdot\widehat{R})\cap B$
identifies $\mathcal{X}(G,H)$ with the set of $B^{\times}$-homothety
classes of locally principal fractional right $R$-ideals in $B$.
When $B$ is not totally definite, the reduced norm $\nr \colon B\rightarrow F$
induces a bijection 
\[
\mathcal{X}(G,H)\stackrel{\simeq}{\longrightarrow}\mathrm{nr}(B^{\times})\backslash\widehat{F}^{\times}/\mathrm{nr}(H)
\]
by the strong approximation theorem \cite[p.81]{vigneras:quaternion}. Moreover, the norm theorem \cite[Thm.III.4.1]{vigneras:quaternion} implies that $\mathrm{nr}(B^{\times})$
is precisely the subgroup of elements $\lambda\in F^{\times}$ such that $\lambda_{v}>0$
for all places $v\mid\infty$ of $F$ where $B$ is not split, i.e., 
where $B_{v}\not\simeq M_{2}(F_{v})$. On the other hand, if $B$ is
totally definite, then $\mathcal{X}(G,H)$ is a genuinely non-commutative
object, which shows up quite often in low dimensional arithmetic geometry (see, e.g., \cite{gross:heights} and \cite{ribet:modreps}). 

Next, let $K$ be a totally imaginary quadratic extension of $F$ that embeds into $B$. Fix an embedding $K\hookrightarrow B$. If $T = \Res_{K/\Q} K^\times$ is the associated rational torus, we get a corresponding  embedding $T \hra G$. Define  
\[
\mathcal{CM}(G,H)=T(\Q)\backslash G(\Af)/H.
\]
This is now an infinite set with an obvious projection map
\begin{equation}\label{eq:projred}
\pi \colon \mathcal{CM}(G,H)\rightarrow\mathcal{X}(G,H)
\end{equation}
In addition, it is equipped with a left action
of $T(\Af)$ with finite orbits. Using Artin's reciprocity
map $\rec_{K} \colon T(\Af) = \widehat{K}^{\times}\twoheadrightarrow\Gal_{K}^{ab}$ 
whose kernel equals the closure of $K^{\times}$ in $\widehat{K}^{\times}$, 
we can also view this action as a continous action of $\Gal_{K}^{ab}$. We shall thus refer to it as the 
\emph{Galois action} on $\mathcal{CM}(G, H)$. 

\vspace{0.1in}

\noindent As suggested by the notation, such sets are most frequently occurring
as sets of complex multiplication points, or special points, on
certain Shimura varieties. Assume for instance that $B$ is split
at a single place $v\mid\infty$ of $F$, say $v=v_{\iota}$ corresponding
to an embedding $\iota:F\rightarrow\mathbb{R}$. Let $X$ be the Shimura
curve of level $H$ attached to $B$. It is an algebraic curve over
the reflex field $\iota F$, and 
\[
\mathcal{CM}(G,H) \simeq G(\Q)\backslash\left(G(\Af)/H\times G(\Q)\cdot\tau\right) \subset X(\mathbb{C})=G(\Q)\backslash\left(G(\Af)/H\times(\mathbb{C}-\mathbb{R})\right)
\]
is the set of special points with complex multiplication by $K$ in
$X$. In this formula, we have fixed an isomorphism $B\otimes_{F,\iota}\mathbb{R}\simeq M_{2}(\mathbb{R})$
to define the action of $G(\Q)$ on $\mathbb{C}-\mathbb{R}$,
and $\tau$ is the unique point on the upper half-plane whose stabilizer is $K^\times = T(\Q)$ under this isomorphism. These special points are defined over the maximal abelian extension of $\iota K$ in $\mathbb{C}$, and the above Galois action is the natural one (this follows from the definition of Shimura varieties). In this setting, the projection map is almost
the projection to the set of connected components of $X$, namely
\[
\mathcal{CM}(G,H)\hookrightarrow X(\mathbb{C})\twoheadrightarrow\pi_{0}(X(\mathbb{C}))=G(\Q)\backslash\left(G(\Af)/H\times\{\pm1\}\right)=G(\Q)^+ \backslash G(\Af) / H.\]
Here $G(\Q)$ acts on $\{\pm1\}$ by the sign of $\iota\circ\det \colon G(\Q) \rightarrow\mathbb{R}^{\times}$,
and $G(\Q)^+$ is the kernel of this action.

\paragraph{Reduction maps.} 
There are similar but more interesting maps to consider. For instance,
let $v$ be a finite place of $F$ such that $B_{v}\simeq M_{2}(F_v)$.
Then our Shimura curve has a natural model over the corresponding
local ring, the special fiber of which contains a distinguished
finite set of supersingular points and this finite set of
points may be identified with a set $\mathcal{X}(G',H')$ as above,
where $G'$ is the algebraic group over $\mathbb{Q}$ associated to
the totally definite quaternion algebra $B'$ over $F$ which is obtained
from $B$ by changing the invariants at $v_{\iota}$ and $v$ (see, e.g., \cite{deligne-rapoport} and \cite{katzmazur} for 
$B=M_{2}(\mathbb{Q})$, and \cite{carayol} for
the remaining cases). If we choose an extension of $v$ to the maximal
abelian extension of $\iota K$ in $\mathbb{C}$ then each
CM point extends to the corresponding local ring. Moreover, if $v$
does not split in $K$, these extended points reduce to supersingular
points on the special fiber and one obtains a reduction map
\begin{equation}\label{eq:red}
\red \colon \mathcal{CM}(G,H)\rightarrow\mathcal{X}(G',H')
\end{equation}
that is described in \cite{cornut:inventiones} and Section~\ref{sec:apps} if $B=M_{2}(\mathbb{Q})$ and in \cite{cornut-vatsal} for the remaining cases. On the other hand, if $B$ is ramified at
$v$, there is a similar description for the reduction map with
values in the set of irreducible components of the special fiber of
a suitable model of $X$ via Ribet's theory of bimodules. We refer
to \cite{molina} for a survey of such maps, and to \cite{molina:hyperelliptic} for
applications to the determination of explicit equations for certain
hyperelliptic Shimura curves.

Given such a reduction map, it is expected that the Galois orbits
in $\mathcal{CM}(G,H)$ tend to be equidistributed among the finitely many fibers of the reduction map 
$\mathrm{red}$. Such equidistribution results are already known in various cases 
\cite{michel:subconvexity}, \cite{cornut-vatsal}, \cite{michel-harcos}, \cite{jetchev-kane}, \cite{molina} and 
were crucial in the proof of Mazur's non-vanishing conjecture by the first author and Vatsal 
\cite{cornut:inventiones}, \cite{vatsal:uniform}, \cite{cornut-vatsal:durham}. 

\paragraph{Our contributions.} 
We propose a simple strategy to reduce the study of the arithmetically
interesting reduction maps \eqref{eq:red} to that of the more straightforward
projections \eqref{eq:projred}. In all cases, there is indeed a natural 
$\mathrm{Gal}_{K}$-equivariant map 
\[
\theta \colon \mathcal{CM}(G,H)\rightarrow\mathcal{CM}(G',H')
\]
such that $\mathrm{red}=\pi\circ\theta$. Thus, for any $\mathrm{Gal}_{K}$-orbit
$\Gamma$ in $\mathcal{CM}(G,H)$ and any point $s\in\mathcal{X}(G',H')$,
we obtain a $\kappa$-to-$1$ surjective map
\[
\theta \colon \Gamma\cap\mathrm{red}^{-1}(s)\rightarrow\Gamma'\cap\pi^{-1}(s)
\]
where $\Gamma'=\theta(\Gamma)$ is a $\mathrm{Gal}_{K}$-orbit in
$\mathcal{CM}(G',H')$ and $\kappa=\kappa(\Gamma)=\left|\Gamma\right|/\left|\Gamma'\right|$. 

This paper essentially implements this strategy when $H=\widehat{R}^{\times}$
for some Eichler $\mathcal{O}_{F}$-order $R$ in $B$. The algebraic
description of $\theta$ (and also of $\mathrm{red}=\pi\circ\theta$)
is given in Section~\ref{subsec:adelic} in a more general setting following the
conventions of \cite{cornut-vatsal}. The size of the Galois orbits (and thus also
the constant $\kappa$ above) is controlled by a simple invariant that is defined in Section~\ref{subsec:cond} together with a refinement: these are the coarse and fine conductors $\bc_{g}$ and $\bc_{f}$, respectively. The number of Galois orbits with a prescribed
fine conductor is also given there using elementary local computations
that are carried on in Section~\ref{sec:ql}. Our main result, Theorem~\ref{thm:main}, 
then describes the restriction of $\theta$ to the fibers of the fine
conductor map. If we furthermore restrict $\theta$ to a given fiber
of $\mathrm{red}$ and $\pi$, we obtain an explicit correspondence
between certain sets of CM points on two distinct quaternion algebras,
a special case of which is used in \cite{jetchev-kane} and thoroughly explained
in the final section. Another case of our main theorem is used in \cite{molina:hyperelliptic} and \cite{molina} to compute explicit equations for Shimura curves with no cusps. 

\section{Review of quadratic orders}\label{sec:ql}
Only for this particular section, $\mathcal{O}_{F}$ will be a Dedekind domain with fraction field $F$. Let $K$ be a semi-simple commutative $F$-algebra of dimension $2$, i.e., $K\simeq F\times F$ or $K$ is a quadratic field extension of $F$. 
Let $\mathcal{O}_{K}$ be the integral closure of $\mathcal{O}_{F}$ in $K$. The map which sends $\mathfrak c$ to $\mathcal{O}_{\mathfrak c}=\mathcal{O}_{F}+\mathfrak c\mathcal{O}_{K}$ is a bijection from the set of all non-zero ideals $\mathfrak  c \subset\mathcal{O}_{F}$ onto the set of all $\mathcal{O}_{F}$-orders in $K$. It is well-known that all such orders are \emph{Gorenstein} rings. We refer to $\mathfrak c$ as the conductor of $\mathcal{O}=\mathcal{O}_{\mathfrak c}$. 

\subsection{Quadratic lattices}\label{subsec:quadlat}
Fix a free, rank one $K$-module $V$. Let $\mathcal{L}$ be the set of all full $\mathcal{O}_{F}$-lattices in $V$. The \emph{conductor} of a lattice $\Lambda \in\mathcal{L}$ is the conductor $c(\Lambda)$ of the $\mathcal{O}_{F}$-order
$\mathcal{O}(\Lambda)=\{\lambda\in K:\lambda \Lambda \subset \Lambda\}=\mathcal{O}_{c(\Lambda)}$. 
It follows from~\cite[Prop.7.2]{bass:gorenstein} that $\Lambda$ is a projective rank
one $\mathcal{O}_{c(\Lambda)}$-module. Let $[\Lambda]$ be its isomorphism class in the Picard group $\Pic(\cO_{c(\Lambda)})$. Since any two $\cO_F$-lattices in $V$ are $K^\times$-homothetic precisely when they have the same conductor and define the same class in the relevant Picard group, the map $\Lambda \mapsto [\Lambda]$ induces a bijection
\begin{equation}
K^{\times}\backslash\mathcal{L}\simeq \bigsqcup_{\mathfrak c}\mathrm{Pic}(\mathcal{O}_{\mathfrak c}).  
\end{equation} 
When $\mathcal{O}_{F}$ is a local ring with maximal ideal $\p_{F}$, $\mathrm{Pic}(\mathcal{O}_{\p_{F}^{n}})=\{1\}$ for all $n$, so the above bijection becomes 
\begin{equation}
K^\times \backslash \cL \simeq \mathbb{N}, \qquad K^\times \Lambda \mapsto n(\Lambda),  
\end{equation}
where $n(\Lambda)$ is the positive integer for which $c(\Lambda)=\p_{F}^{n(\Lambda)}$. 

\subsection{The action of $K^\times$ on $\cL \times \cL$}\label{subsec:action}
Assume that $\cO_F$ is a discrete valuation ring. We will describe the orbits of $K^{\times}$ acting diagonaly on $\mathcal{L} \times \mathcal{L}$. 

\paragraph{A $K^\times$-invariant of $\cL^2$.} There is an invariant $K^{\times}\backslash\mathcal{L}^2 \twoheadrightarrow (K^{\times}\backslash\mathcal{L})^2 \simeq\mathbb{N}^{2}$,
given by 
\begin{equation}
x=(\Lambda',\Lambda'')\mapsto\underline{n}(x)=\left(n(\Lambda'),n(\Lambda'')\right).\label{eq:DefFineCond}\end{equation}
There is another invariant $K^{\times}\backslash\mathcal{L}^2 \twoheadrightarrow \GL_{F}(V)\backslash\mathcal{L}^{2}\simeq\mathfrak{S}_{2}\backslash\mathbb{Z}^{2}$
that describes the relative position of two lattices. It maps $x=(\Lambda', \Lambda'')$
to the unique pair of integers $\mathrm{inv}(x)=\{i_1, i_2\}\in\mathfrak{S}_{2}\backslash\mathbb{Z}^{2}$
for which there exists an $F$-basis $(e_{1},e_{2})$ of $V$ such that 
\begin{equation}
\Lambda'=\mathcal{O}_{F}e_{1}\oplus\mathcal{O}_{F}e_{2}\quad\mbox{and}\quad \Lambda''=\p_{F}^{i_{1}}e_{1}\oplus \p_{F}^{i_{2}}e_{2}.\label{eq:DefInvRel}
\end{equation}

\paragraph{The distance function.} This latter invariant is related to the distance function on the set of vertices $\cV = F^\times \backslash \cL$ of 
the Bruhat-Tits tree of $\PGL_F (V )$: if $v'$ and $v''$ are the images of $\Lambda'$ and $\Lambda''$ in $\cV$, then 
$\dist(v', v'') = \left | i_1 - i_2\right |$. Since $K^\times \backslash \cL \isom \NN$, also $K^\times \backslash \cV \isom \NN$. 
In other words, the function $n$ on $\cL$ descends to a function $n \colon \cV \ra \NN$ whose fibers 
$\cV(k) = \{v  \in \cV \colon n(v) = k\}$ are precisely the
$K^\times$-orbits in $\cV$. We claim that also $\cV(k) = \{v \in \cV \colon \dist(v, \cV(0)) = k\}$ for all $k \in \NN$.
To prove this, first note that $\cV(0)$ is a convex subset of $\cV$, namely a single vertex
if $K$ is an unramified extension of $F$, a pair of adjacent vertices if $K$ is a ramified
extension of $F$, and a line in the building (i.e. an apartement) if $K \isom F \times F$. 

Now, let $v$ be any vertex in $\cV(k)$. Then $v$ is represented by $\Lambda = \cO_{\p_F^k}  e$ for some $K$-basis 
$e$ of $V$. 
For $0 \leq j \leq k$, let $v_j \in \cV$ be the $F^\times$-homothety class of $\Lambda_j = \cO_{\p_F^j} e \in  \cL$. Then $v_j \in \cV(j)$ and $(v_k, v_{k-1}, \dots , v_0)$ is a path of length $k$ from $v = v_k$ to the vertex $v_0$
of $\cV(0)$. Therefore $\dist(v, \cV(0)) = j$ for some $0 \leq j \leq k$, and the convexity of $\cV(0)$
implies that the $j$th term in our path, namely $v_{k-j}$, should already be in $\cV(0)$.
Therefore $k-j = 0$ and $\dist(v, \cV(0)) = k$.

\paragraph{Counting the $K^\times$-orbits.}
Finally, fix $(n',n'')\in\mathbb{N} \times \mathbb{N}$, $(i_{1},i_{2})\in\Z \times \Z$
and let $\delta=\left|i_{1}-i_{2}\right|$. It follows from the above
considerations that the projection $\mathcal{L}\twoheadrightarrow\mathcal{V}$
induces a bijection between
$$
\mathcal{L}(n',n'';i_{1},i_{2})=\left\{ x\in K^{\times}\backslash\mathcal{L}^{2}\mbox{ s.t. }\underline{n}(x)=(n',n'')\mbox{ and }\mathrm{inv}(x)=\{i_{1},i_{2}\}\right\} 
$$
and the set of $K^{\times}$-orbits of pairs $(v',v'')\in\cV \times \cV$
such that \[
\mathrm{dist}(v',\mathcal{V}(0))=n',\quad\mathrm{dist}(v'',\mathcal{V}(0))=n''\quad\mbox{and}\quad\mathrm{dist}(v',v'')=\delta.\]
If for instance $n'\geq n''$, the choice of a vertex $v'\in\mathcal{V}(n')$
identifies the latter set of $K^\times$-orbits with the set of all vertices $v''\in\mathcal{V}(n'')$
at distance $\delta$ from $v'$. Using then the above description of $\cV(0)$, it is a simple combinatorial exercise to 
prove the following: 

\begin{lem}\label{lem:LocComp}
If $\mathcal{O}_{F}/\p_{F}$ is finite of order
$q$, then $\mathcal{L}(n',n'';i_{1},i_{2})$ is finite of order
\[
N(n',n'',\delta)=\left|\mathcal{L}(n',n'';i_{1},i_{2})\right|\quad\mbox{with }\delta=\left|i_{1}-i_{2}\right|.\]
Moreover $N(n',n'',\delta)=0$ unless one of the following conditions
holds:
\begin{enumerate}
\item $\delta=\left|n'-n''\right|+2r$ for some $0\leq r<\min(n',n'')$.
Then\[
N(n',n'',\delta)=\begin{cases}
1 & \mbox{if }r=0,\\
(q-1)q^{r-1} & \mbox{if }r>0\end{cases}\]

\item $K$ is an unramified extension of $F$ and $\delta=n'+n''$.
Then\[
N(n',n'',\delta)=q^{\min(n',n'')}.\]

\item $K$ is a ramified extension of $F$ and $\delta=n'+n''+s$ with
$s\in\{0,1\}$. Then\[
N(n',n'',\delta)=\begin{cases}
q^{\min(n',n'')} & \mbox{if }s=1\mbox{ or }\min(n',n'')=0,\\
(q-1)q^{\min(n',n'')-1} & \mbox{if }s=0<\min(n',n'').\end{cases}\]

\item $K\simeq F\times F$ and $\delta=n'+n''+s$ with $s\in\mathbb{N}$.
Then \[
N(n',n'',\delta)=\begin{cases}
1 & \mbox{if }\min(n',n'')=0=s,\\
2 & \mbox{if }\min(n',n'')=0<s,\\
(q-2)q^{\min(n',n'')-1} & \mbox{if }\min(n',n'')>0=s,\\
2(q-1)q^{\min(n',n'')-1} & \mbox{if }\min(n',n'',s)>0.\end{cases}\]
\end{enumerate}
\end{lem}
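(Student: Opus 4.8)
The plan is to reduce the count to a purely combinatorial one in the Bruhat--Tits tree $\cV$. Since swapping $\Lambda'$ and $\Lambda''$ gives $N(n',n'',\delta)=N(n'',n',\delta)$, we may assume $n'\ge n''$. By the reduction recorded just before the statement it then suffices to count, for any fixed vertex $v'\in\cV(n')$, the set $S_\delta=\{v''\in\cV(n''):\dist(v',v'')=\delta\}$ with $\delta=|i_1-i_2|$; this genuinely computes $N(n',n'',\delta)$ because the stabiliser of $v'$ in $K^\times$ (the image of $1+\p_F^{n'}\cO_K$) fixes every vertex $w$ with $n(w)\le n'$, so distinct elements of $S_\delta$ give distinct $K^\times$-orbits of pairs $(v',v'')$. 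Since $\cO_F/\p_F$ has $q$ elements, $\cV$ is the $(q+1)$-regular tree, hence locally finite, so $S_\delta$ is finite -- this is the finiteness assertion -- and it remains to evaluate $|S_\delta|$.

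To organise the count I would use the convexity of $\cV(0)$ -- a single vertex, an edge, or an apartment according as $K/F$ is unramified, ramified, or split -- together with the standard facts that every $v\in\cV$ has a unique nearest point $\pi(v)\in\cV(0)$ with $\dist(v,\pi(v))=n(v)$, and that $\dist(\cdot,\cV(0))$ restricted to any geodesic is convex and is locally constant only along $\cV(0)$ itself. Write the geodesic from $v'$ to $\pi(v')$ as $v'=w_{n'},w_{n'-1},\dots,w_0=\pi(v')$ with $w_k\in\cV(k)$. Given $v''\in S_\delta$, let $c$ be the first vertex along $[v',v'']$ at which $\dist(\cdot,\cV(0))$ attains its minimum; then $c=w_j$ for $j=n(c)$ and $\dist(v',c)=n'-j$. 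Either (i) $j\ge 1$, in which case $[c,v'']$ leaves $w_j$ in a direction away from $\cV(0)$ and different from the one towards $v'$, with $\dist(c,v'')=n''-j$, so $\delta=|n'-n''|+2r$ for $r:=n''-j\in\{0,\dots,n''-1\}$; or (ii) $j=0$, i.e. $c=\pi(v')$, in which case $[c,v'']$ first runs $s\ge 0$ steps inside $\cV(0)$ to the vertex $\pi(v'')$ (at distance $s$ from $\pi(v')$) and then $n''$ steps off $\cV(0)$, so $\delta=n'+n''+s$, where $s=0$ is forced when $\cV(0)$ is a point, $s\in\{0,1\}$ when it is an edge, and $s$ is arbitrary when it is an apartment. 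In particular $N(n',n'',\delta)=0$ unless $\delta$ has one of the listed forms, and cases (1)--(4) of the statement correspond respectively to (i), to (ii) with $\cV(0)$ a point, to (ii) with $\cV(0)$ an edge, and to (ii) with $\cV(0)$ an apartment.

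Each regime then reduces to counting vertices at a prescribed depth in the rooted $q$-ary trees hanging off $c$ or off $\cV(0)$, using that a vertex at distance $\ge 1$ from $\cV(0)$ has exactly one neighbour towards $\cV(0)$ and $q$ away from it. In case (i) the first step from $c=w_j$ has $q-1$ admissible choices (the $q$ directions away from $\cV(0)$, minus the one towards $v'$) and each of the remaining $r-1$ steps has $q$ choices, yielding $1$ if $r=0$ and $(q-1)q^{r-1}$ if $r>0$; this is case (1). In case (ii) the relevant local number is the count of neighbours of $\pi(v')$ not lying along $\cV(0)$, namely $q+1$, $q$, or $q-1$ according as $\cV(0)$ is a point, an edge, or an apartment. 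When $s=0$ one such neighbour is used (towards $v'$) and the final $n''$ steps off $\cV(0)$ each contribute $q$, giving $q^{n''}$, $(q-1)q^{n''-1}$, and $(q-2)q^{n''-1}$ respectively -- the $s=0$ entries of cases (2), (3), (4). When $s\ge 1$ the geodesic reaches $\pi(v'')$ along $\cV(0)$ and departs into one of its $q$ (resp. $q-1$) off-$\cV(0)$ neighbours, contributing $q^{n''}$ (resp. $(q-1)q^{n''-1}$) for each admissible $\pi(v'')$; since there is exactly one admissible $\pi(v'')$ at distance $1$ along an edge and exactly two at distance $s\ge 1$ along an apartment, one obtains $q^{n''}$ (case (3), $s=1$) and $2(q-1)q^{n''-1}$ (case (4), $s\ge 1$). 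The degenerate case $n''=0$ (so $v''\in\cV(0)$, with no final off-$\cV(0)$ step) gives $1$ when $s=0$ and, in the split case, $2$ when $s\ge 1$; since $n''=\min(n',n'')$ throughout, all the formulas match the statement.

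The walk count in the rooted $q$-ary subtrees is entirely routine; the only point needing real care is the local bookkeeping in case (ii) at $\pi(v')$ and $\pi(v'')$ -- determining precisely which edges incident to these vertices (those along $\cV(0)$ and the one towards $v'$) have already been traversed by $[v',v'']$. This is exactly what replaces the uniform ``$q-1$ free directions'' of case (i) by the $q$ / $q-1$ / $q-2$ trichotomy, and it is also the source of the parity feature: $\delta\equiv n'+n''\pmod 2$ fails precisely when the geodesic crosses from one end of a ramified $\cV(0)$ to the other ($s=1$) or travels a positive distance along the apartment. Once this is set up correctly, the four cases of the lemma fall out.
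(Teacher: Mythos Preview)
Your proposal is correct and follows exactly the approach the paper itself outlines: the paper reduces the count to the set of vertices $v''\in\cV(n'')$ at distance $\delta$ from a fixed $v'\in\cV(n')$ (assuming $n'\ge n''$) and then declares the rest ``a simple combinatorial exercise,'' which is precisely the exercise you carry out. Your geodesic decomposition via the first minimum of $n(\cdot)$ along $[v',v'']$ and the ensuing bookkeeping at $\pi(v')$, $\pi(v'')$ give the right case split and the right counts; the justification that fixing $v'$ already rigidifies the orbit (because the stabiliser $F^\times\mathcal O_{\p_F^{n'}}^\times$ fixes every vertex with $n\le n'$) is the one point the paper leaves implicit, and you supply it.
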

%
%
\section{Adelic constructions of the lifting and the reduction maps}\label{sec:lifting}

\subsection{Notation and preliminaries}\label{subsec:not}
We now switch back to the notation in the introduction, where $F$ is a totally real number field, $B$ is a quaternion algebra over $F$, $K$ is a totally imaginary quadratic extension of $F$ and $\iota \colon K \hra B$ is an embedding. We denote by $\mathrm{Ram}_{f}B$ the set of all finite places of $F$ where $B$ does not split. Let $S$ be a finite set of finite places of $F$ that satisfy the following hypotheses: 

{\bf H.1.} $B$ is unramified at every $v \in S$, i.e.,  $\mathrm{Ram}_{f}B\cap S=\emptyset$, 

{\bf H.2.} $|S| + |\Ram_f(B)|$ + $[F:\Q]$ is even,

{\bf H.3.} Every $v \in S$ is either inert or ramified in $K$.    

Hypotheses {H.1.} and {H.2.} imply that there exists a unique (up to isomorphism) totally 
definite quaternion algebra $B_S$ over $F$ ramified exactly at the finite places $\Ram_f(B) \cup S$. Hypothesis {H.3.} implies that $K$ embeds into $B_S$. 
We fix once and for all such an embedding $\iota_{S}:K\hookrightarrow B_{S}$.
We let 
\[
G=\mathrm{Res}_{F/\mathbb{Q}}B^{\times},\qquad G_{S}=\mathrm{Res}_{F/\mathbb{Q}}B_{S}^{\times}\qquad\mbox{and}\qquad T=\mathrm{Res}_{F/\mathbb{Q}}K^{\times}.
\]

\subsection{Adelic construction of the map $\theta_S$}\label{subsec:adelic}
Following Cornut and Vatsal (see~\cite[\S 2.1]{cornut-vatsal}), we consider the following locally compact and totally discontinous groups:
\[
\left\{ \begin{array}{rcccl}
T(\mathbb{A}_{f}) & = & (K\otimes\mathbb{A}_{f})^{\times} & = & {\textstyle \prod'}K_{v}^{\times}\\
G(\mathbb{A}_{f}) & = & (B\otimes\mathbb{A}_{f})^{\times} & = & {\textstyle \prod'}B_{v}^{\times}\\
G_{S}(\mathbb{A}_{f}) & = & (B_{S}\otimes\mathbb{A}_{f})^{\times} & = & {\textstyle \prod'}B_{S,v}^{\times}\end{array}\right.\enskip\mbox{and}\quad G(S)=\left({\textstyle \prod'_{v\notin S}}B_{S,v}^{\times}\right)\times{\textstyle \prod}_{v\in S}F_{v}^{\times}.
\] 
\noindent The restricted products on the left are the usual ones defined by the arbitrary choice of an integral structure on the 
algebraic groups, whereas the restricted product on the right is induced by that of $G_{S}(\mathbb{A}_{f})$.
These topological groups together fit in a commutative diagram
$$
\xymatrix{
& G(\Af) \ar@{->}[rd]^{\phi_S}&  \\ 
T(\Af) \ar@{->}[ur]^{\iota} \ar@{->}[rd]^{\iota_S} & &  G(S) \\ 
& G_S(\Af) \ar@{->}[ru]^{\pi_S}  &       
} 
$$
where $\iota$ and $\iota_{S}$ are the continuous embeddings induced
by their algebraic counterparts, where $\pi_{S}$ is the continuous,
open and surjective morphism 
\[
G_{S}(\mathbb{A}_{f})={\textstyle \prod}'_{v\notin S}B_{S,v}^{\times}\times {\textstyle \prod}_{v\in S}B_{S,v}^{\times}\twoheadrightarrow {\textstyle \prod}'_{v\notin S}B_{S,v}^{\times}\times{\textstyle \prod}_{v\in S}F_{v}^{\times}=G(S)
\]
which is induced by the identity on $B_{S,v}$ for $v\not\in S$ and by the reduced norm 
$\mathrm{nr}_{S, v} \colon B_{S,v}^{\times}\rightarrow F_{v}^{\times}$ for $v\in S$. Finally, the continuous, open and 
surjective morphism
\[
\phi_S \colon G(\mathbb{A}_{f})={\textstyle \prod}'_{v\notin S}B_{v}^{\times}\times {\textstyle \prod}_{v\in S}B_{v}^{\times}\twoheadrightarrow{\textstyle \prod}'_{v\notin S}B_{S,v}^{\times} \times {\textstyle \prod}_{v\in S}F_{v}^{\times}=G(S)
\]
is again induced by the reduced norm $\mathrm{nr}_{v} \colon B_{v}^{\times}\rightarrow F_{v}^{\times}$
for $v\in S$, and by well-chosen isomorphisms $\theta_{S,v} \colon B_{v}\stackrel{\simeq}{\longrightarrow}B_{S, v}$
for $v\notin S$ whose choice is carefully explained in \cite[\S 2.1.3]{cornut-vatsal}.

\paragraph{Topological properties and Galois action.} We use the above commutative diagram to let 
$T(\mathbb{A}_{f})$ act on $G(\mathbb{A}_{f})$, $G_{S}(\mathbb{A}_{f})$ and $G(S)$ by multiplication on the left.
Let $\overline{T(\mathbb{Q})}$ be the closure of $T(\mathbb{Q})$
in $T(\mathbb{A}_{f})$, and recall that class field theory yields
an isomorphism of topological groups 
$$
\mathrm{Art}_K \colon T(\mathbb{A}_{f})/\overline{T(\mathbb{Q})} \stackrel{\simeq}{\longrightarrow} \Gal_{K}^{ab}.
$$
We thus obtain continuous $\Gal_{K}^{ab}$-equivariant maps of topological
$\Gal_{K}^{ab}$-sets \[
\overline{T(\mathbb{Q})}\backslash G(\mathbb{A}_{f})\stackrel{\phi_{S}}{\longrightarrow}\overline{T(\mathbb{Q})}\backslash G(S)\stackrel{\pi_{S}}{\longleftarrow}\overline{T(\mathbb{Q})}\backslash G_{S}(\mathbb{A}_{f}).\]
These maps are also respectively equivariant for the right actions
of $G(\mathbb{A}_{f})$ and $G_{S}(\mathbb{A}_{f})$. For a compact
open subgroup $H$ of $G(\mathbb{A}_{f})$, define $H(S)=\phi_{S}(H)$
and $H_{S}=\pi_{S}^{-1}(H(S))$. We now have $\Gal_{K}^{ab}$-equivariant
maps of discrete $\Gal_{K}^{ab}$-sets \[
\overline{T(\mathbb{Q})}\backslash G(\mathbb{A}_{f})/H\stackrel{\phi_{S}}{\longrightarrow}\overline{T(\mathbb{Q})}\backslash G(S)/H(S)\stackrel{\pi_{S}}{\longleftarrow}\overline{T(\mathbb{Q})}\backslash G_{S}(\mathbb{A}_{f})/H_{S}\]
and the above $\pi_{S}$ is a bijection by construction of $H_{S}$.
Since \[
\mathcal{CM}(G,H)=T(\mathbb{Q})\backslash G(\mathbb{A}_{f})/H=\overline{T(\mathbb{Q})}\backslash G(\mathbb{A}_{f})/H\]
and similarly for $G_{S}$, we obtain a $\Gal_{K}^{ab}$-equivariant
map of discrete $\Gal_{K}^{ab}$-sets \[
\theta_{S}=\pi_{S}^{-1}\circ\phi_{S}:\mathcal{CM}(G,H)\rightarrow\mathcal{CM}(G_{S},H_{S}).\]
If $H=\prod_{v}H_{v}$ in $G(\mathbb{A}_{f})=\prod'_{v}B_{v}^{\times}$,
then $H_{S}=\prod_{v}H_{S,v}$ in $G_{S}(\mathbb{A}_{f})=\prod'_{v}B_{S,v}^{\times}$
with\[
H_{S,v}=\phi_{v}(H_{v})\mbox{ for }v\notin S\quad\mbox{and}\quad H_{S,v}=\nr_{S,v}^{-1}\left(\nr_{v}(H_{v})\right)\mbox{ for }v\in S.\]
In this case, the map induced by $\theta_{S}$ on the $\Gal_{K}^{ab}$-orbit
spaces, 
\begin{equation}\label{eq:thetabar}
\overline{\theta}_{S} \colon \Gal_{K}^{ab}\backslash\mathcal{CM}(G,H)\rightarrow\Gal_{K}^{ab}\backslash\mathcal{CM}(G_{S},H_{S})
\end{equation}
has a purely local description, namely 
\begin{equation}\label{eq:thetabar-local}
\overline{\theta}_{S}=(\overline{\theta}_{S,v}) \colon {\textstyle \prod'_{v}}K_{v}^{\times}\backslash B_{v}^{\times}/H_{v}\rightarrow{\textstyle \prod'_{v}}K_{v}^{\times}\backslash B_{S,v}^{\times}/H_{S,v}
\end{equation}
where $\overline{\theta}_{S,v}$ is the bijection induced by $\theta_{S, v} \colon B_{v} \stackrel{\simeq}{\longrightarrow} B_{S, v}$ for $v\not\in S$, and equals \[
K_{v}^{\times}\backslash B_{v}^{\times}/H_{v}\stackrel{\nr_{v}}{\longrightarrow}\nr_{v}(K_{v}^{\times})\backslash F_{v}^{\times}/\nr_{v}(H_{v})\stackrel{\nr_{S,v}^{-1}}{\longrightarrow}K_{v}^{\times}\backslash B_{S,v}^{\times}/H_{S,v}\]
for $v\in S$. 

\noindent The construction of the map $\theta_S$ also gives us a reduction map 
\begin{equation}\label{eq:redmap}
\red_S \colon \mathcal{CM}(G, H) \ra \cX(G_S, H_S)
\end{equation}
defined by $\red_S := \pi \circ \theta_S$, where $\pi \colon \mathcal{CM}(G_S, H_S) \ra \cX(G_S, H_S)$ is the natural projection map. 

\subsection{Fine and coarse conductors}\label{subsec:cond}
We now specialize the above constructions to the case $H=\widehat{R}^{\times}$ for some Eichler 
$\mathcal{O}_{F}$-order $R$ in $B$ that will be fixed throughout. The level of $R$ is a nonzero integral ideal of 
$\mathcal{O}_{F}$ that we denote by $\mathfrak{n}$. We also choose two maximal $\mathcal{O}_{F}$-orders
$R'$ and $R''$ in $B$ such that $R=R'\cap R''$. Then $H_{S}=\widehat{R}_{S}^{\times}$
where $R_{S}$ is an Eichler $\mathcal{O}_{F}$-order in $B_{S}$
whose level $\mathfrak{n}_{S}$ is the prime-to-$S$-part of $\mathfrak{n}$. 
We also obtain two maximal $\mathcal{O}_{F}$-orders $R'_{S}$
and $R''_{S}$ in $B_{S}$ such that $R_{S}=R'_{S}\cap R''_{S}$. For a finite place $v$ of $F$, $R_{S, v} = \theta_{S, v}(R_v)$ if $v \notin S$ while $R_{S, v}$ is the unique maximal order of $B_{S, v}$ if $v \in S$.
Explicitly,
\[
R_{S}=\left\{ b\in B_{S} \colon \forall v\notin S,\,\theta_{S,v}^{-1}(b)\in R_{v} \text{ and }\forall v \in S, \ \nr_{S, v}(b) \in \cO_{F,v} \right\}.  
\]
We have a similar explicit description for $R'_{S}$ and $R''_{S}$.

\paragraph{Definitions.} For any finite set of places $T$ of $F$, let $\mathcal{I}(T)$ be the monoid of integral ideals of $\cO_F$ that are coprime to the places of $T$. The \emph{fine conductor} is a $\Gal_K^{\ab}$-invariant map   
\begin{equation}
\bc_{f} \colon \mathcal{CM}(G, H) \ra \mathcal{I}(\Ram_f B) \times \mathcal{I}(\Ram_f B)
\end{equation}
that is defined as follows: given a CM point $x = [g]$ in $\mathcal{CM}(G, H) = T(\Q) \backslash G(\Af) / H$, consider the images $x'$ and $x''$ of $x$ in $T(\Q) \backslash G(\Af) / \widehat{R'}^\times$ and $T(\Q) \backslash G(\Af) / \widehat{R''}^\times$, respectively. 
The intersection $K \cap g\widehat{R'} g^{-1}$ is an $\cO_F$-order in $K$ that depends only on the $\Gal_K^{\ab}$-orbit 
of $x$ and whose conductor $\bc(x')$ is an $\cO_F$-ideal that is prime to $\Ram_f B$. 
Similarly, we obtain an integral ideal $\bc(x'') \subset \cO_F$ for $R''$. One then defines $\bc_{f}(x) := (\bc(x'), \bc(x''))$.  
The \emph{coarse conductor map}   
\begin{equation}
\bc_{g} \colon \mathcal{CM}(G, H) \ra \mathcal{I}(\Ram_f B)
\end{equation}
is defined as $\bc_{g}(x) = \bc(x') \cap \bc(x'') \subset \cO_F$. The stabilizer of $x$ in $T(\Af) = \widehat{K}^\times$ is then 
$$
\Stab_{T(\Af)}(x) = K^\times \cdot \left ( \widehat{K}^\times \cap g \widehat{R}^\times g^{-1} \right ) = K^\times \widehat{\cO_{\bc_{g}(x)}}^\times,  
$$
and the field $K(x)$ that is fixed by the stabilizer of $x$ in $\Gal_K^{\ab}$ is the ring class field $K[\bc_{g}(x)]$ of conductor $\bc_{g}(x)$, i.e., the abelian extension of $K$ fixed by $\rec_K(K^\times \widehat{\cO_{\bc_{g}(x)}}^\times)$. 

Similarly, we define the fine and coarse conductors 
$$
\bc_{S, \f} \colon \mathcal{CM}(G_S, H_S) \ra \cI(\Ram_{f}B_S)^2 \qquad \text{and} \qquad \bc_{S, \g} \colon 
\mathcal{CM}(G_S, H_S) \ra \cI(\Ram_{f} B_S). 
$$
Note that if $y = \theta_S(x)$ then $\bc(y)$ is the prime-to-$S$ part of $\bc(x)$.  We thus have a commutative diagram
\begin{equation}\label{eq:DiagThetaEichler}
\xymatrix{
\mathcal{CM}(G,H) \ar@{->>}[r] \ar@{->}[d]^{\theta_S} & \Gal_{K}^{ab}\backslash\mathcal{CM}(G,H) \ar@{->}[r]^{\mathfrak c_\f} \ar@{->}[d]^{\overline{\theta}_S} & \mathcal{I}(\mathrm{Ram}_{f}B)^{2} \ar@{->}[d]^{()_S} \\
\mathcal{CM}(G_{S},H_{S}) \ar@{->>}[r] & \Gal_{K}^{ab}\backslash\mathcal{CM}(G_{S},H_{S}) \ar@{->}[r]^{\mathfrak c_{S, \f}} & \mathcal{I}(\mathrm{Ram}_{f}B_{S})^{2}, 
}
\end{equation}
where the map $()_{S}$ sends $I \in\mathcal{I}(\Ram_f B)$ to its prime-to-$S$ part $I_{S}\in\mathcal{I}(\Ram_f B_S)$. 

\paragraph{Local analysis.} The right-hand square of this diagram can be analyzed by purely local means: it is the restricted product over all finite primes $v$ of $F$ of  one of the following diagrams:  

\vspace{0.1in}

{\bf Case 1:} $v \notin \Ram_f B \cup S$. 
\[
\xymatrix{
K_{v}^{\times}\backslash B_{v}^{\times}/R_{v}^{\times} \ar@{->}[r]^-{\underline{n}_{v}} \ar@{->}[d]^{\theta_{S, v}} & \mathbb{N} \times \mathbb{N} \ar@{->}[d]^{\mathrm{id}} \\
K_{v}^{\times}\backslash B_{S,v}^{\times}/R_{S,v}^{\times} \ar@{->}[r]^-{\underline{n}_{S,v}} & \mathbb{N} \times \mathbb{N}
}
\]

\vspace{0.1in}

{\bf Case 2:} $v \in S$. 
\[
\xymatrix{
K_{v}^{\times}\backslash B_{v}^{\times}/R_{v}^{\times} \ar@{->}[r]^-{\underline{n}_{v}} \ar@{->}[d]^{\theta_{S, v}} & \mathbb{N} \times \mathbb{N} \ar@{->}[d]^{0} \\
K_{v}^{\times}\backslash B_{S,v}^{\times}/R_{S,v}^{\times} \ar@{->}[r] & 0
}
\]

\vspace{0.1in}

{\bf Case 3:} $v \in \Ram_f B$. 
\[
\xymatrix{
K_{v}^{\times}\backslash B_{v}^{\times}/R_{v}^{\times} \ar@{->}[r]^-{\underline{n}_{v}} \ar@{->}[d]^{\theta_{S, v}} & 0 \ar@{->}[d]^{0} \\
K_{v}^{\times}\backslash B_{S,v}^{\times}/R_{S,v}^{\times}  \ar@{->}[r] & 0
}
\]
 
Here, for $\star \in \{\emptyset,S\}$, the map $\underline{n}_{\star,v}$ sends $K_{v}^{\times}g_{v}R_{\star,v}^{\times}$ to the pair of integers $(n',n'')$ such that $K_{v}\cap g_{v}R'_{\star, v}g_{v}^{-1}$ (resp. $K_{v}\cap g_{v}R''_{\star, v}g_{v}^{-1}$)
is the order of conductor $\mathfrak p_{v}^{n'}$ (resp. $\mathfrak p_v^{n''}$) in $\mathcal{O}_{K_{v}}$, where $\mathfrak p_{v}$ is the maximal ideal in $\mathcal{O}_{F_{v}}$. 

These maps are related to the one defined in \eqref{eq:DefFineCond} as follows: fix a simple left $B_{\star, v}$-module $V_{\star, v}$, let $\mathcal{L}_{v}$ be the set of all $\mathcal{O}_{F_{v}}$-lattices in $V_{\star, v}$ and fix two lattices $\Lambda'_v$ and $\Lambda''_v$ in $\mathcal{L}_{v}$ that are fixed by $(R'_{\star, v})^\times$ and $(R''_{\star, v})^{\times}$, respectively. 
Thus, if 
$(i_1, i_2) = \mathrm{inv}(\Lambda'_v, \Lambda''_v)$ as defined in (\ref{eq:DefInvRel}) then we have 
$\left| i_1 - i_2 \right|=v(\mathfrak{n})$, $R_{\star, v}^{\times}$ is the stabilizer of $(\Lambda'_v, \Lambda''_v) \in \mathcal{L}_{v} \times \mathcal{L}_v$ and $B_{\star, v}^{\times}$ acts transitively on the set $\mathcal{L}_{v}(i_1, i_2)$ of pairs of lattices $(\Lambda', \Lambda'')$ with $\mathrm{inv}(\Lambda', \Lambda'')=(i_1, i_2)$.
Therefore, 
\begin{equation}\label{eq:ident}
K_{v}^{\times}\backslash B_{\star, v}^{\times}/R_{\star, v}^{\times}\simeq K_{v}^{\times}\backslash\mathcal{L}_{v}(i_1, i_2)
\end{equation}
is contained in $K_{v}^{\times}\backslash\mathcal{L}_{v} \times \mathcal{L}_v$. The
map $\underline{n}_{v}$ that we have defined above on the former set is equal to the restriction of the map 
defined by \eqref{eq:DefFineCond} on the latter. In particular, 
the fiber of $\underline{n}_{v}$ over any $(n', n'') \in \mathbb{N} \times \mathbb{N}$
is finite of order $N_{v}(n', n'' ,v(\mathfrak{n}))$ as defined in Lemma~\ref{lem:LocComp} with $q = N(v)$ the order of the residue field of $F$ at $v$.

\subsection{A sign invariant}
In the three diagrams above, the vertical maps are surjective. In
Cases $1$ and $3$, the restriction of the first vertical map to compatible
fibers of the two horizontal maps is still obviously surjective. But
in Case $2$, it may be that such a restriction fails to be surjective.
Since for $v \in S$, 
\[
K_{v}^{\times}\backslash B_{S,v}^{\times}/R_{S,v}^{\times}\simeq
\begin{cases}
\mathbb{Z}/2\mathbb{Z} & \mbox{if }K_{v}/F_{v}\mbox{ is unramified}\\
\{0\} & \mbox{otherwise,}
\end{cases}
\]
this occurs precisely when $v\in S$ is inert in $K$. Let thus $S'$
be the set of all places $v$ in $S$ that are inert in $K$. For
any such $v \in S'$, the valuation of the reduced norm induces a projection  
$$
K_v^\times \backslash B_v^\times / R_v^{\times} \twoheadrightarrow NK_v^\times \backslash F_v^\times / \nr(R_v^\times) \isom\Z/2\Z,  
$$
as well as a bijection $K_v^\times \backslash B_{S, v}^\times / R_{S, v}^\times \isom \Z / 2\Z$. We thus obtain maps
$$
\vphi_{S'} \colon \mathcal{CM}(G, H) \ra \Gal_K^{\ab} \backslash \mathcal{CM}(G, H) \ra 
\prod_{v \in S'} K_v^\times \backslash B_{v}^\times / R_v^\times \ra (\Z / 2\Z)^{|S'|} 
$$  
and 
$$
\psi_{S'} \colon \mathcal{CM}(G_S, H_S) \ra \Gal_K^{\ab} \backslash \mathcal{CM}(G_S, H_S) \twoheadrightarrow  
\prod_{v \in S'} K_v^\times \backslash B_{S, v}^\times / R_{S, v}^\times \isom (\Z / 2\Z)^{|S'|}. 
$$
By construction, $\vphi_{S'} = \psi_{S'} \circ \theta_S$. 


The map $\vphi_{S'}$ admits a description in terms of lattices.  
Under the identification \eqref{eq:ident}, the element $K_v^\times g_v R_v^\times$ corresponds to the pair of lattices 
$(g_v \Lambda_v', g_v \Lambda_v'')$. Let $y'$ and $y''$ be the images of $\Lambda_v'$ and $\Lambda_v''$ in the Bruhat--Tits tree $\cV_v = F_v^\times \backslash \cL_v$ and recall from Section~\ref{sec:ql} that for all $k \in \mathbb{N}$, 
$$
\cV_v(k) = \{y \in \cV_v \colon n(y) = k\} = \{y \in \cV_v \colon \dist(y, y_0) = k\}, 
$$
where $\cV_v(0) = \{y_0\}$ since $v$ is inert in $K$. Therefore, 
$$
n(g_v y') - n(y') = \dist(g_v y', y_0) - \dist(y', y_0) \equiv \dist(g_v y', y') \equiv v(\det(g_v)) \textrm{ mod } 2
$$
and similarly for $y''$. Note that the same argument also shows that 
$$
n(y') - n(y'') \equiv v(\mathfrak n) \textrm { mod } 2. 
$$
The map $\vphi_{S'}$ can thus be computed as follows: for $x \in \mathcal{CM}(G, H)$ with $\bc_{f}(x) = (\mathfrak c' , \mathfrak c'')$, 
\begin{equation}\label{eq:defphiS}
\vphi_{S'}(x) = (v(\mathfrak c') - n(\Lambda_v') \textrm{ mod }2)_{v \in S'} = (v(\mathfrak c'') - n(\Lambda_v'') \textrm{ mod }2)_{v \in S'} \in (\Z / 2\Z)^{|S'|}. 
\end{equation}
In particular, $\vphi_{S'}$ factors through the fine conductor map $\bc_{f}$. 

\subsection{Statement of the main theorem}\label{subsec:corresp}





Fix $(\bc', \bc'') \in \cI(\Ram_{f} B)$ such that $v(\mathfrak c') - v(\mathfrak c'') \equiv v(\n)$ mod 2 for every $v \in S'$.  Let 
$e_{S'}(\mathfrak c', \mathfrak c'')$ be the element of $(\Z/2\Z)^{|S'|}$ defined by the right-hand side of \eqref{eq:defphiS}. 
Let $\mathfrak c = \mathfrak c' \cap \mathfrak c''$ and let 
$$
\kappa = |\Gal(K[\mathfrak c] / K[\mathfrak c_S])| \prod_{v \in S}N_v(v(\bc'), v(\bc''), v(\n)),  
$$ 
where $N_v(n', n'', \delta)$ is the function defined in Lemma~\ref{lem:LocComp} with $q = N(v)$ being the order of the residue field of $F$ at $v$.  


\begin{thm}\label{thm:main}
By restriction, the map $\theta_S$ induces a surjective $\kappa$-to-1 correspondence
$$
\theta_S \colon \bc_{f}^{-1}(\mathfrak c', \mathfrak c'') \ra \bc_{S, \f}^{-1}(\mathfrak c'_S, \mathfrak c''_S) \cap \psi_{S'}^{-1}(e_{S'}(\mathfrak c', \mathfrak c'')). 
$$
\end{thm}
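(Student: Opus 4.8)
The plan is to factor the problem through the commutative diagram \eqref{eq:DiagThetaEichler} and reduce everything to the local computations already assembled in Section~\ref{sec:ql}. The key point is that, since $\bc_\f$, $\bc_{S,\f}$ and $\psi_{S'}$ are all $\Gal_K^{\ab}$-invariant, we may first analyze $\overline{\theta}_S$ on $\Gal_K^{\ab}$-orbit spaces, where it has the purely local description \eqref{eq:thetabar-local}, and then go back to recover the constant $\kappa$ by accounting for the sizes of the Galois orbits. So the proof naturally splits into two parts: a local analysis of $\overline{\theta}_S$ and a global count of fibers of $\mathcal{CM}(G,H)\twoheadrightarrow\Gal_K^{\ab}\backslash\mathcal{CM}(G,H)$ over a prescribed coarse conductor.

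\textit{Step 1: surjectivity on orbit spaces.} First I would show that $\overline{\theta}_S$ restricts to a surjection from $\bc_\f^{-1}(\bc',\bc'')$ onto $\bc_{S,\f}^{-1}(\bc'_S,\bc''_S)\cap\psi_{S'}^{-1}(e_{S'}(\bc',\bc''))$ at the level of orbit spaces. By \eqref{eq:thetabar-local} this is a restricted product of local statements over the finite places $v$ of $F$, matching the three cases of the Local analysis paragraph. For $v\notin\Ram_f B\cup S$ the local map $\theta_{S,v}$ is a bijection compatible with $\underline n_v$, so it restricts to a bijection on the fibers over any fixed $(n',n'')$. For $v\in\Ram_f B$ both sides are singletons. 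For $v\in S$ the target $K_v^\times\backslash B_{S,v}^\times/R_{S,v}^\times$ is either a point (if $v$ ramifies in $K$) or $\Z/2\Z$ (if $v\in S'$), and in the latter case the relevant fiber condition is exactly $\psi_{S'}$, where surjectivity onto the prescribed value $e_{S'}(\bc',\bc'')$ follows from the identity \eqref{eq:defphiS} together with the congruence hypothesis $v(\bc')-v(\bc'')\equiv v(\n)$ mod $2$, which guarantees that the prescribed fine conductor is actually achieved. Assembling the local pieces over the restricted product gives surjectivity of $\overline{\theta}_S$ between the stated fibers.

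\textit{Step 2: the fiber count on orbit spaces.} Next I would compute, for a point $\bar y$ in the target orbit space lying in $\bc_{S,\f}^{-1}(\bc'_S,\bc''_S)\cap\psi_{S'}^{-1}(e_{S'})$, the cardinality of $\overline{\theta}_S^{-1}(\bar y)\cap\bc_\f^{-1}(\bc',\bc'')$. Again this is a product over $v$: it is $1$ for $v\notin S$ (the map is a bijection on fibers by Step~1), while for $v\in S$ the fiber of $\underline n_v$ over $(v(\bc'),v(\bc''))$ inside $K_v^\times\backslash B_v^\times/R_v^\times$ has size $N_v(v(\bc'),v(\bc''),v(\n))$ by the identification \eqref{eq:ident} and Lemma~\ref{lem:LocComp} (with $q=N(v)$), and $\theta_{S,v}$ collapses this entire fiber onto a single point of the target (Cases~2). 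Hence the fiber of $\overline{\theta}_S$ on orbit spaces has constant size $\prod_{v\in S}N_v(v(\bc'),v(\bc''),v(\n))$.

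\textit{Step 3: passing from orbits to points.} Finally I would pass from orbit spaces back to $\mathcal{CM}$ by a Galois-orbit size comparison. For $x\in\bc_\f^{-1}(\bc',\bc'')$ the coarse conductor is $\bc=\bc'\cap\bc''$ and, by the description of stabilizers in Section~\ref{subsec:cond}, the field fixed by $\Stab(x)$ is the ring class field $K[\bc]$, so $|\Gal_K^{\ab}\cdot x|=|\Gal(K[\bc]/K)|$; similarly the orbit of $y=\theta_S(x)$ has size $|\Gal(K[\bc_S]/K)|$, where $\bc_S$ is the prime-to-$S$ part of $\bc$. Therefore each point of a target orbit has $[\Gal(K[\bc]/K):\Gal(K[\bc_S]/K)]=|\Gal(K[\bc]/K[\bc_S])|$ preimages within a single source orbit, and combining this with the orbit-space fiber count of Step~2 (using that $\theta_S$ is $\Gal_K^{\ab}$-equivariant, so it maps each source orbit onto a target orbit) yields that $\theta_S$ is everywhere $\kappa$-to-$1$ with $\kappa=|\Gal(K[\bc]/K[\bc_S])|\prod_{v\in S}N_v(v(\bc'),v(\bc''),v(\n))$, as claimed. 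The main obstacle I anticipate is Step~3: one must check carefully that the Galois action is compatible with the fibration over orbit spaces in the right way — i.e. that the preimage under $\theta_S$ of a full target orbit, intersected with the fine-conductor fiber, breaks up into source orbits each of the same size and each mapping onto the target orbit with the multiplicity computed locally — so that the two counts genuinely multiply to a single constant $\kappa$ independent of the chosen point.
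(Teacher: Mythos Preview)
Your proposal is correct and follows essentially the same route as the paper's own proof: the paper restricts the first square of \eqref{eq:DiagThetaEichler} to the relevant fine-conductor fibers, records your Steps~1--2 as a lemma (that $\theta_S^{(2)}$ on orbit spaces is surjective onto $\Gal_K^{\ab}\backslash(\bc_{S,\f}^{-1}(\bc'_S,\bc''_S)\cap\psi_{S'}^{-1}(e_{S'}))$ and $k^{(2)}$-to-$1$ with $k^{(2)}=\prod_{v\in S}N_v(v(\bc'),v(\bc''),v(\n))$), records the orbit sizes $h(\bc)$ and $h(\bc_S)$ in a second lemma, and then declares the theorem an easy consequence. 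Your Step~3 is exactly this final combination, and your anticipated ``obstacle'' dissolves for the reason you already note: equivariance plus constancy of orbit sizes on each fine-conductor fiber.
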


\noindent By restricting to a subspace of the target space, we immediately obtain the following:

\begin{cor}\label{cor:main}
For any point $s \in \cX(G_S, H_S)$, the map $\theta_S$ induces a $\kappa$-to-1 correspondence 
\begin{equation}\label{eq:corr}
\theta_S \colon \red_S(s)^{-1} \cap \bc_{f}^{-1}(\mathfrak c', \mathfrak c'') \ra \pi^{-1}(s) \cap \bc_{S, \f}^{-1}(\mathfrak c'_S, \mathfrak c''_S) \cap \psi_{S'}^{-1}(e_{S'}(\mathfrak c', \mathfrak c'')). 
\end{equation}
\end{cor}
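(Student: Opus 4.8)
The plan is to reduce the global statement to the purely local computations already set up in Section~\ref{subsec:cond}, using the Galois-orbit factorization established in the introduction. First I would observe that since $\bc_\f$ and $\bc_{S,\f}$ are $\Gal_K^{\ab}$-invariant and $\theta_S$ is $\Gal_K^{\ab}$-equivariant, the fiber $\bc_\f^{-1}(\bc',\bc'')$ is a union of $\Gal_K^{\ab}$-orbits, and likewise for the target; so it suffices to fix one $\Gal_K^{\ab}$-orbit $\Gamma \subset \bc_\f^{-1}(\bc',\bc'')$, let $\Gamma_S = \theta_S(\Gamma)$, and show (a) that $\Gamma_S$ lands inside $\bc_{S,\f}^{-1}(\bc'_S,\bc''_S)\cap\psi_{S'}^{-1}(e_{S'}(\bc',\bc''))$; (b) that every orbit in the target fiber arises this way; and (c) that $|\Gamma|/|\Gamma_S| = \kappa$ for every such $\Gamma$. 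Part of (a) is immediate: $\bc(\theta_S(x))$ is the prime-to-$S$ part of $\bc(x)$ by the remark preceding \eqref{eq:DiagThetaEichler}, which gives $\bc_{S,\f}(\Gamma_S) = (\bc'_S,\bc''_S)$; and the containment in $\psi_{S'}^{-1}(e_{S'}(\bc',\bc''))$ is exactly the identity $\vphi_{S'} = \psi_{S'}\circ\theta_S$ combined with \eqref{eq:defphiS}, which computes $\vphi_{S'}$ on $\Gamma$ in terms of $(\bc',\bc'')$ alone.

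Next I would pass to the local picture via \eqref{eq:thetabar} and \eqref{eq:thetabar-local}: the map $\bar\theta_S$ on $\Gal_K^{\ab}$-orbit spaces is the restricted product of the local maps $\bar\theta_{S,v}$, and under the identification \eqref{eq:ident} each local orbit space $K_v^\times\backslash B_{\star,v}^\times/R_{\star,v}^\times$ is $K_v^\times\backslash\mathcal L_v(i_1,i_2)$ with $|i_1-i_2| = v(\n)$. So the three diagrams (Cases 1, 2, 3) describe $\bar\theta_{S,v}$ completely: at $v\notin\Ram_f B\cup S$ it is a bijection compatible with $\underline n_v$; at $v\in\Ram_f B$ both sides are points; and at $v\in S$ the source fibers over $(\Z/2\Z)$ (or a point) via $\val\circ\nr_v$ and the target is a point (or $\Z/2\Z$). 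Counting $|\Gamma|/|\Gamma_S|$ then splits as a product of local contributions. The contribution from the finite places $v\in S$ is precisely $\prod_{v\in S}N_v(v(\bc'),v(\bc''),v(\n))$: the source fiber of $\underline n_v$ over $(v(\bc'),v(\bc''))$ has this cardinality by Lemma~\ref{lem:LocComp} (with $q = N(v)$), while the target is a single point, so the whole source fiber collapses onto it (this is why we must further cut the target by $\psi_{S'}$: the two points $0,1\in\Z/2\Z$ on the target side at $v\in S'$ correspond to the two congruence classes $v(\det g_v)\bmod 2$, and $e_{S'}(\bc',\bc'')$ records exactly which one we land in). The remaining global discrepancy comes from the fact that $\Gamma$ and $\Gamma_S$ have stabilizers corresponding, via $\Art_K$, to the ring class fields $K[\bc_{\g}(x)] = K[\bc]$ and $K[\bc_{S,\g}(\theta_S x)] = K[\bc_S]$ respectively (using the stabilizer formula $\Stab_{T(\Af)}(x) = K^\times\widehat{\cO_{\bc_\g(x)}}^\times$ from Section~\ref{subsec:cond}), so $|\Gamma|/|\Gamma_S| = [K[\bc]:K[\bc_S]] = |\Gal(K[\bc]/K[\bc_S])|$ times the local factor above; this is the definition of $\kappa$.

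To finish, for surjectivity (part (b)) I would argue backwards along the same local description: given a $\Gal_K^{\ab}$-orbit $\Gamma_S'$ in the target fiber $\bc_{S,\f}^{-1}(\bc'_S,\bc''_S)\cap\psi_{S'}^{-1}(e_{S'}(\bc',\bc''))$, I need to produce a preimage orbit with the prescribed fine conductor $(\bc',\bc'')$. Locally this amounts to: at $v\notin S$, $\bar\theta_{S,v}$ is already a bijection, so there is nothing to do; at $v\in S\setminus S'$ ramified in $K$, both source and target orbit spaces are points and I just need the source-side local fiber over $(v(\bc'),v(\bc''))$ to be nonempty, i.e. $N_v(v(\bc'),v(\bc''),v(\n))>0$, which holds precisely under the parity condition $v(\bc')-v(\bc'')\equiv v(\n)\bmod 2$ imposed in the hypothesis (Lemma~\ref{lem:LocComp}, case on $K_v/F_v$); at $v\in S'$ inert in $K$, I need the source-side local fiber over $(v(\bc'),v(\bc''))$, which has order $N_v(v(\bc'),v(\bc''),v(\n))>0$ by the same parity condition, to meet the prescribed $\Z/2\Z$-component of $\psi_{S'}$'s value — but by \eqref{eq:defphiS} the entire local fiber maps to the single element $v(\bc')-n(\Lambda_v')\bmod 2 = (e_{S'}(\bc',\bc''))_v$, so this is automatic. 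Then strong/weak approximation glues these local preimages into a global CM point, and one checks its $\Gal_K^{\ab}$-orbit is independent of choices. The surjectivity of $\theta_S$ itself onto $\Gamma_S'$ then follows from surjectivity of $\phi_S$ (it is open and surjective) together with the fact that $\pi_S$ is a bijection by construction of $H_S$. I expect the main obstacle to be the bookkeeping in part (b)/(c): precisely matching the global index $[K[\bc]:K[\bc_S]]$ against the product of the "missing" local stabilizer indices at $v\in S$ and confirming that no additional discrepancy hides at the places $v\mid\Ram_f B$ or at the archimedean places — i.e., that the stabilizer computation is clean enough that the naive local product genuinely equals $\kappa$ with no correction factor.
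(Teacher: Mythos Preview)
Your proposal is essentially a proof of Theorem~\ref{thm:main}, not of the corollary. In the paper, Corollary~\ref{cor:main} is obtained from Theorem~\ref{thm:main} in one line: since $\red_S = \pi \circ \theta_S$, we have $\theta_S^{-1}(\pi^{-1}(s)) = \red_S^{-1}(s)$, and a surjective $\kappa$-to-$1$ map restricted to the full preimage of any subset of its target is again surjective and $\kappa$-to-$1$ onto that subset. You never invoke this observation, and instead reprove the $\kappa$-to-$1$ statement on the whole fine-conductor fiber from scratch. That is not wrong, but it is much more than the corollary requires; and strictly speaking your write-up never actually addresses the restriction to $\pi^{-1}(s)$ at all.

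As a proof of Theorem~\ref{thm:main}, your outline matches the paper's strategy in Section~\ref{sec:proof} (Lemmas~\ref{lem:locgalorb} and~\ref{lem:galorb}): pass to $\Gal_K^{\ab}$-orbit spaces, use the local decomposition~\eqref{eq:thetabar-local}, and combine the orbit-collapsing count $k^{(2)} = \prod_{v\in S} N_v(v(\bc'),v(\bc''),v(\n))$ with the orbit-size ratio $h(\bc)/h(\bc_S) = |\Gal(K[\bc]/K[\bc_S])|$. One slip: you write ``$|\Gamma|/|\Gamma_S| = [K[\bc]:K[\bc_S]]$ times the local factor above,'' which conflates two different quantities. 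For a single orbit $\Gamma$ with image $\Gamma_S$, the size ratio is just $|\Gal(K[\bc]/K[\bc_S])|$; the product $\prod_{v\in S} N_v$ is the number of source orbits over a given target orbit. It is the product of these two numbers that equals $\kappa$. Also, at $v \in S\setminus S'$ (ramified in $K$) no parity condition is imposed in the hypotheses, nor is one needed: case~(3) of Lemma~\ref{lem:LocComp} shows $N_v>0$ for both parities of $\delta - (n'+n'')$.
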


\subsection{Computation of the fiber for the definite algebra $B_S$}

Before proving the theorem, we shall explain how to compute the right-hand side of the correspondence \eqref{eq:corr}. Fix $s \in \cX(G_S, H_S)$ and some $g \in G_S(\Af)$ above $s$. Then $b \mapsto b^{-1}g$ induces a bijection 
$$
R_{S, g}^\times \backslash G_S(\Q) / T(\Q) \stackrel{\sim}{\longrightarrow} \pi^{-1}(s), 
$$
where $R_{S, g} = g \widehat{R_S}g^{-1} \cap B_S$ is an Eichler order of level $\n_S$ in $B_S$. On the other hand, the map $b \mapsto \text{ad}(b) \circ \iota_S$ induces a bijection 
$$
G_S(\Q ) / T(\Q) \stackrel{\sim}{\longrightarrow} \Hom_{F-\alg}(K, B_S) 
$$
that is $G_S(\Q) = B_S^\times$-equivariant for the natural left actions on both sides. Combining these two identifications, we obtain 
$$
\pi^{-1}(s) \isom R_{S, g}^\times \backslash \Hom_{F-\alg}(K, B_S). 
$$
Let also $R_{S, g}' = g \widehat{R_S'}g^{-1} \cap B_S$ and $R_{S, g}'' = g \widehat{R_S''}g^{-1} \cap B_S$. These are maximal orders in $B_S$ and $R_{S, g} = R_{S,g}' \cap R_{S, g}''$. Under these identifications, 
\begin{itemize}
\item The restriction to $\pi^{-1}(s)$ of the fine conductor map $\bc_{S, f} \colon \mathcal{CM}(G_S, H_S) \ra \cI(\Ram_{f}B_S)^2$ is induced by the $R_{S, g}^\times$-invariant map 
$$
\bc_{S, f}^g \colon \Hom_{F-\alg}(K, B_S) \ra \cI(\Ram_{f} B_S)^2
$$
whose fiber over $(\bc_S', \bc_S'')$ consists of the embeddings $j \colon K \ra B_S$ such that 
$j^{-1}(R_{S, g}') = \cO_{\bc'_S}$ and $j^{-1}(R_{S, g}'') = \cO_{\bc''_S}$, so that $j^{-1}(R_{S, g}) = \cO_{\bc_S}$ with $\bc_S = \bc'_S \cap \bc''_S$. 

\item The restriction to $\pi^{-1}(s)$ of the sign invariant $\psi_{S'} \colon \mathcal{CM}(G_S, H_S) \ra (\Z / 2\Z)^{|S'|}$ is induced by the $R_{S, g}^\times$-invariant map 
$$
\psi_{S'}^g \colon \Hom_{F-\alg}(K, B_S) \stackrel{\text{sp}}{\longrightarrow} \prod_{v \in S'}\Hom_{\F(v)}(\mathbb{K}(v), \mathbb{B}_S(v)) 
\stackrel{\sim}{\ra} (\Z / 2\Z)^{|S'|}, 
$$
where $\F(v)$, $\mathbb{K}(v)$ and $\mathbb{B}_S(v)$ are the residue fields of the maximal 
$\cO_{F_v}$-orders in $F_v$, $K_v$ and $B_{S, v}$, respectively and $\text{sp}$ is the natural specialization map. Moreover, the last bijection is the unique isomorphism of $(\Z / 2\Z)^{|S'|}$-torsors that maps $\text{sp}(\iota_S)$ to $\psi_{S'}(s) = (v(\det g_v) \text{ mod }2)_{v \in S'}$. 
\end{itemize}

\begin{rem}
For a coarse conductor $\bc \in \cI(\Ram_{f}B)$ that is prime to the level $\n$, we may replace the fine conductor maps by the coarse ones in the above statements since 
$$
\bc_{f}(\bc, \bc) = \bc_g^{-1}(\bc) \qquad \text{and} \qquad \bc_{S, f}^{-1}(\bc_S, \bc_S) = \bc_{S, g}^{-1}(\bc_S). 
$$
In this situation, the right-hand side of Corollary~\ref{cor:main} counts the number of $R_{S, g}^\times$-conjugacy classes of optimal embeddings $\cO_{\bc_S} \hra R_S$ that induce a given collection of isomorphisms $\mathbb{K}(v) \ra \mathbb{B}_S(v)$ between the corresponding residue field at $v \in S'$. If, in addition, $S'$ consists of a single prime $S' = \{\ell\}$, one can remove this last condition by identifying the embeddings that are conjugated by the non-trivial automorphism of $\Gal(K/F) \isom \Gal(\mathbb{K}(v) / \mathbb{F}(v))$. 
\end{rem}


\begin{rem}
For $F = \Q$, one can interpret the right-hand side of \eqref{eq:corr} in terms of primitive representations of integers by ternary quadratic forms (see \cite[pp.172--173]{gross:heights} and 
\cite[Prop.4.2]{jetchev-kane} for details). 
\end{rem}

%
%
\subsection{Proof of Theorem~\ref{thm:main}}\label{sec:proof}

To prove the theorem, let $(\mathfrak{c}_{S}',\mathfrak{c}_{S}'')\in\mathcal{I}(\mathrm{Ram}_{f}B_{S})^{2}$ be the prime-to-$S$ parts of
$(\mathfrak{c}', \mathfrak{c}'') \in \mathcal{I}(\mathrm{Ram}_{f} B)^2$. 
Consider the diagram
$$
\xymatrix{
\bc_{\f}^{-1}(\mathfrak{c}',\mathfrak{c}'') \ar@{->>}[r] \ar@{->}[d]^{\theta_S^{(1)}} & \Gal_{K}^{ab}\backslash \mathfrak{c}_{\f}^{-1}(\mathfrak{c}',\mathfrak{c}'') \ar@{->}[d]^{\theta_S^{(2)}} \\
\mathfrak{c}_{S, \f}^{-1}(\mathfrak{c}_{S}',\mathfrak{c}_{S}'') \ar@{->>}[r] & \Gal_{K}^{ab}\backslash \mathfrak{c}_{S, \f}^{-1}(\mathfrak{c}_{S}',\mathfrak{c}_{S}'')
}
$$
obtained from the first square of \eqref{eq:DiagThetaEichler} by restriction to the relevant fibers of $\mathfrak{c}_{\f}$ and 
$\mathfrak{c}_{S, \f}$. Note that the vertical maps may fail to be surjective, but the local analysis of Section~\ref{subsec:action} shows that 
\begin{lem}\label{lem:locgalorb}
The map $\theta_S^{(2)}$ maps $\Gal_{K}^{ab}\backslash \mathfrak{c}_{\f}^{-1}(\mathfrak{c}',\mathfrak{c}'')$ onto 
$ \Gal_K^{\ab} \backslash \left (\bc_{S, \f}^{-1}(\bc_S', \bc_S'') \cap \psi_{S'}^{-1}(e_{S'}(\bc', \bc''))\right )$ and moreover, 
it is $k^{(2)}$-to-1, where  
$$
k^{(2)}=\prod_{v\in S}N_{v}\left(v(\mathfrak{c}'),v(\mathfrak{c}''),v(\mathfrak{n})\right). 
$$
\end{lem}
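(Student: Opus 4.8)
The plan is to reduce the statement about $\theta_S^{(2)}$ to the purely local computation already set up in Section~\ref{subsec:action}, using the adelic description of $\overline{\theta}_S$ from \eqref{eq:thetabar-local} and the local-analysis diagrams (Cases~1--3). First I would fix representatives and reduce to a product over places. Under the parametrization \eqref{eq:ident}, the set $\Gal_K^{\ab}\backslash\mathcal{CM}(G,H)$ is the restricted product $\prod'_v K_v^\times\backslash B_v^\times/R_v^\times$, and the fine conductor $\bc_\f$ is the product of the local maps $\underline n_v$; so the fiber $\bc_\f^{-1}(\bc',\bc'')$ (taken already on $\Gal_K^{\ab}$-orbits) decomposes as a restricted product of the local fibers $\underline n_v^{-1}(v(\bc'),v(\bc''))$, which by the last paragraph of Section~\ref{subsec:cond} is finite of cardinality $N_v(v(\bc'),v(\bc''),v(\n))$. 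The map $\theta_S^{(2)}$ is the restricted product of the local bijections $\overline\theta_{S,v}$ for $v\notin S$ and of the reduced-norm maps $K_v^\times\backslash B_v^\times/R_v^\times \xrightarrow{\nr_v} K_v^\times\backslash B_{S,v}^\times/R_{S,v}^\times$ for $v\in S$. For $v\notin S\cup\Ram_f B$ (Case~1) the local map is a bijection compatible with $\underline n_v$, hence restricts to a bijection between the compatible local fibers; for $v\in\Ram_f B$ (Case~3) both local fiber sets are singletons; so the only interesting contribution is at $v\in S$ (Case~2).

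Second, I would analyze Case~2 in detail. For $v\in S$ we have $\n_{S,v}=\cO_{F_v}$, so the target $K_v^\times\backslash B_{S,v}^\times/R_{S,v}^\times = K_v^\times\backslash\mathcal L_v(0,0) = K_v^\times\backslash\mathcal V_v(0)\times\mathcal V_v(0)$ is a single point when $v$ is ramified in $K$ and is $\Z/2\Z$ when $v$ is inert (i.e. $v\in S'$), by the displayed computation preceding the definition of $\psi_{S'}$. The source is $\underline n_v^{-1}(v(\bc'),v(\bc''))$, which has $N_v(v(\bc'),v(\bc''),v(\n))$ elements by Lemma~\ref{lem:LocComp}. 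When $v\in S\setminus S'$ the reduced-norm map collapses this whole set to the single point, so it is trivially surjective and $N_v(v(\bc'),v(\bc''),v(\n))$-to-$1$. When $v\in S'$, I would use exactly the computation in \eqref{eq:defphiS}: any element $K_v^\times g_v R_v^\times$ in the source has image in $\Z/2\Z$ equal to $v(\det g_v)\equiv v(\bc')-n(\Lambda_v')\pmod 2$, which is the prescribed value (this uses the congruence $\dist(y,\cV_v(0))\equiv n(y)$ and the invariant-factor congruence $\dist(g_v y',y')\equiv v(\det g_v)$ from Section~\ref{sec:ql}). Hence the reduced-norm map carries \emph{all} of $\underline n_v^{-1}(v(\bc'),v(\bc''))$ into the single coset $e_{S'}(\bc',\bc'')_v$, it is surjective onto that coset, and every fiber has $N_v(v(\bc'),v(\bc''),v(\n))$ elements.

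Third, I would assemble the global statement by taking the restricted product over all $v$. The image of $\theta_S^{(2)}$ is the restricted product of the local images: a bijective image for $v\notin S$, a point for $v\in S\setminus S'$, and the coset $e_{S'}(\bc',\bc'')_v$ for $v\in S'$ — which is exactly $\Gal_K^{\ab}\backslash\bigl(\bc_{S,\f}^{-1}(\bc_S',\bc_S'')\cap\psi_{S'}^{-1}(e_{S'}(\bc',\bc''))\bigr)$, since $\bc_{S,\f}$ is the product of the $\underline n_{S,v}$ and $\psi_{S'}$ is the product of the local $\Z/2\Z$-components. The fiber cardinality is the product of the local fiber cardinalities: $1$ for $v\notin S$ (including $v\in\Ram_f B$), and $N_v(v(\bc'),v(\bc''),v(\n))$ for $v\in S$, giving $k^{(2)}=\prod_{v\in S}N_v(v(\bc'),v(\bc''),v(\n))$ as claimed.

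I expect the main obstacle to be the bookkeeping at $v\in S'$: one must check that the reduced-norm collapse lands in precisely the coset dictated by $e_{S'}(\bc',\bc'')$ and not its complement, which is the content of the parity computation \eqref{eq:defphiS} and is the only place where the hypothesis $v(\bc')-v(\bc'')\equiv v(\n)\pmod 2$ is used (it guarantees $e_{S'}$ computed via $\Lambda_v'$ and via $\Lambda_v''$ agree, so $\psi_{S'}$ is well-defined on the fiber). The compatibility of the restricted-product structures — ensuring that almost all local factors are the identity so the product map is well-defined and the fiber counts multiply finitely — is routine given $H=\widehat R^\times$, but should be noted.
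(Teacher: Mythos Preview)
Your proposal is correct and follows exactly the approach the paper intends: the paper's entire justification for this lemma is the single clause ``the local analysis of Section~\ref{subsec:action} shows that,'' and you have spelled out that local analysis in full---decomposing $\overline\theta_S$ as in \eqref{eq:thetabar-local}, handling Cases~1 and~3 as bijections on compatible fibers, and using the parity computation \eqref{eq:defphiS} together with Lemma~\ref{lem:LocComp} to pin down both the image and the fiber size at each $v\in S$. Your treatment of the $S'$ versus $S\setminus S'$ dichotomy and the final assembly over the restricted product is exactly what the paper leaves implicit.
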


\noindent The same local analysis allows us to compute the number of Galois orbits of CM points with a prescribed fine conductor for each of the algebras 
$B$ and $B_S$:  
\begin{lem}\label{lem:galorb}
(i) The number of $\Gal_{K}^{ab}$-orbits in $\mathcal{CM}(G,H)$ with
fine conductor $(\mathfrak{c}',\mathfrak{c}'')\in\mathcal{I}(\mathrm{Ram}_{f}B)^{2}$
is finite and equal to 
$$
N_{B}(\mathfrak{c}',\mathfrak{c}'',\mathfrak{n})=  2^{\#\{v \in \mathrm{Ram}_{f} B,\ v\ \mathrm{inert\ in\ }K\}} \prod_{v\notin\mathrm{Ram}_{f}B}N_{v}\left(v(\mathfrak{c}'),v(\mathfrak{c}''),v(\mathfrak{n})\right), 
$$
where the product is taken over all finite primes. The number of CM points in each of these orbits is equal to 
\[
h(\mathfrak{c})=\left|\Gal(K[\mathfrak{c}]/K)\right|\quad\mbox{where }\mathfrak{c}=\mathfrak{c}'\cap\mathfrak{c}''.
\]

\noindent (ii) The number of $\Gal_{K}^{ab}$-orbits in $\mathcal{CM}(G_S,H_S)$ with fine conductor 
$(\mathfrak{c}'_S,\mathfrak{c}''_S)\in\mathcal{I}(\mathrm{Ram}_{f}B_S)^{2}$ is finite and equal to 
$$
N_{B_S}(\mathfrak{c}'_S,\mathfrak{c}''_S,\mathfrak{n})=  2^{\#\{v \in \mathrm{Ram}_{f} B_S,\ v\ \mathrm{inert\ in\ }K\}} \prod_{v\notin\mathrm{Ram}_{f}B_S}N_{v}\left(v(\mathfrak{c}'_S),v(\mathfrak{c}''_S),v(\mathfrak{n})\right), 
$$
where the product is taken over all finite primes. The number of CM points in each of these orbits is equal to 
\[
h(\mathfrak{c}_S)=\left|\Gal(K[\mathfrak{c}_S]/K)\right|\quad\mbox{where }\mathfrak{c}_S=\mathfrak{c}'_S\cap\mathfrak{c}''_S.
\]
\end{lem}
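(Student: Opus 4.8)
The plan is to reduce everything to the local product decomposition of $\Gal_K^{\ab}\backslash\mathcal{CM}(G,H)$ recorded in \eqref{eq:thetabar-local}, combined with the local count of Lemma~\ref{lem:LocComp} and the stabilizer description of Section~\ref{subsec:cond}. First I would recall that, since the $\Gal_K^{\ab}$-action on $\mathcal{CM}(G,H)=\overline{T(\Q)}\backslash G(\Af)/H$ comes from left multiplication by $T(\Af)$ via $\mathrm{Art}_K$, we have $\Gal_K^{\ab}\backslash\mathcal{CM}(G,H)=T(\Af)\backslash G(\Af)/H$, which for $H=\widehat R^\times$ is the restricted product $\prod'_v K_v^\times\backslash B_v^\times/R_v^\times$; under this decomposition the ($\Gal_K^{\ab}$-invariant) fine conductor map $\bc_\f$ is the product of the local maps $\underline n_v$ of Section~\ref{subsec:cond}. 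Hence the set of $\Gal_K^{\ab}$-orbits with fine conductor $(\mathfrak c',\mathfrak c'')$ is in bijection with $\prod'_v\bigl(\text{fiber of }\underline n_v\text{ over }(v(\mathfrak c'),v(\mathfrak c''))\bigr)$.

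Next I would compute each local factor. For $v\notin\mathrm{Ram}_f B$ we have $B_v\simeq M_2(F_v)$ and $R_v$ an Eichler order of level $v(\mathfrak n)$; via the identification \eqref{eq:ident} the fiber of $\underline n_v$ over $(v(\mathfrak c'),v(\mathfrak c''))$ is the corresponding fiber of \eqref{eq:DefFineCond} inside $K_v^\times\backslash\mathcal L_v(i_1,i_2)$, which by Lemma~\ref{lem:LocComp} (with $q=N(v)$) has exactly $N_v(v(\mathfrak c'),v(\mathfrak c''),v(\mathfrak n))$ elements; in particular $N_v(0,0,0)=1$, so the product is genuinely finite. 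For $v\in\mathrm{Ram}_f B$, $B_v$ is a division algebra with a unique maximal order, which forces $R_v=R'_v=R''_v$ and makes the local conductor trivial (consistent with $(\mathfrak c',\mathfrak c'')\in\mathcal I(\mathrm{Ram}_f B)^2$); moreover the normalized valuation $w\colon B_v^\times\twoheadrightarrow\Z$ has kernel $R_v^\times$ and identifies $K_v^\times\backslash B_v^\times/R_v^\times$ with $w(K_v^\times)\backslash\Z$, which is $\Z/2\Z$ when $K_v/F_v$ is unramified (note $v\in\mathrm{Ram}_f B$ rules out $v$ split in $K$) and trivial when $K_v/F_v$ is ramified. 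Multiplying over all $v$ yields the asserted value $N_B(\mathfrak c',\mathfrak c'',\mathfrak n)=2^{\#\{v\in\mathrm{Ram}_f B,\ v\text{ inert in }K\}}\prod_{v\notin\mathrm{Ram}_f B}N_v(v(\mathfrak c'),v(\mathfrak c''),v(\mathfrak n))$.

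For the orbit sizes I would invoke the stabilizer computation of Section~\ref{subsec:cond}: for $x=[g]$ one has $\Stab_{T(\Af)}(x)=K^\times\widehat{\mathcal O_{\bc_g(x)}}^\times$, and the field fixed by $\Stab_{\Gal_K^{\ab}}(x)$ is the ring class field $K[\bc_g(x)]$, so the $\Gal_K^{\ab}$-orbit of $x$ has cardinality $[T(\Af):\Stab_{T(\Af)}(x)]=|\Gal(K[\bc_g(x)]/K)|$. Since the coarse conductor is the intersection of the two components of the fine one, $\bc_\f(x)=(\mathfrak c',\mathfrak c'')$ gives $\bc_g(x)=\mathfrak c'\cap\mathfrak c''=\mathfrak c$, so every orbit in $\bc_\f^{-1}(\mathfrak c',\mathfrak c'')$ has $h(\mathfrak c)=|\Gal(K[\mathfrak c]/K)|$ points. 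This proves (i).

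For (ii) I would run the same argument verbatim with $G_S$, $H_S=\widehat{R_S}^\times$ and the Eichler order $R_S$ of level $\mathfrak n_S$: here $\mathrm{Ram}_f B_S=\mathrm{Ram}_f B\cup S$, and for $v\notin\mathrm{Ram}_f B_S$ (in particular $v\notin S$) the local data of $R_S$ at $v$ is that of an Eichler order of level $v(\mathfrak n_S)=v(\mathfrak n)$, so the split-place factors are again $N_v(v(\mathfrak c'_S),v(\mathfrak c''_S),v(\mathfrak n))$ while the ramified places of $B_S$ contribute $2^{\#\{v\in\mathrm{Ram}_f B_S,\ v\text{ inert in }K\}}$; the orbit-size statement follows from the $B_S$-analogue of the stabilizer computation. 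One should only check that the adelic formalism of Section~\ref{subsec:adelic} — in particular $\overline{T(\Q)}\backslash G_S(\Af)/H_S=T(\Q)\backslash G_S(\Af)/H_S$ and the local product decomposition — applies to the totally definite $B_S$, which it does since openness of $H_S$ makes $K^\times(\widehat K^\times\cap g\widehat{R_S}^\times g^{-1})$ closed in $\widehat K^\times$. The only genuinely non-formal point, and thus the part I expect to require the most care, is the analysis at the ramified places of $B$ and of $B_S$, where the $\mathcal L_v$-counting of Lemma~\ref{lem:LocComp} is unavailable and one must argue directly with the valuation on the division algebra to see that the double-coset set is $\Z/2\Z$ or trivial according to the splitting of $v$ in $K$.
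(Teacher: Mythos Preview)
Your proposal is correct and follows the same approach the paper intends: the paper itself does not write out a proof but simply says ``the same local analysis allows us to compute\ldots'', pointing back to the product decomposition in \eqref{eq:thetabar-local}, the identification \eqref{eq:ident} with Lemma~\ref{lem:LocComp} at split places, and the stabilizer computation in Section~\ref{subsec:cond}. Your treatment of the ramified places via the valuation on the local division algebra is exactly the missing detail, and your observation that $v\in\mathrm{Ram}_f B$ forces $v$ non-split in $K$ (since $K_v$ embeds in the division algebra $B_v$) is the right justification.
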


\noindent Theorem~\ref{thm:main} now follows easily from Lemma~\ref{lem:locgalorb}. 



%
%
\section{Application to Galois orbits of Heegner points on modular curves}\label{sec:apps}

\subsection{The modular curve $Y_0(N)$}
Let $N\geq1$ be an integer and let $S$ be a scheme over $\Spec \Z\left [\frac{1}{N} \right ]$. An
enhanced elliptic curve $(E,C)$ over $S$ is an elliptic curve $E$ over $S$ together with a closed subgroup $C$ that is locally isomorphic to the constant group scheme
$(\mathbb{Z}/N\mathbb{Z})_{S}$ for the \'etale topology on $S$. The modular curve 
$Y=Y_{0}(N)$ is a smooth affine curve over $\Spec \mathbb{Z}\left [\frac{1}{N} \right]$ that coarsely 
represents the contravariant functor mapping $S$ to the set of isomorphism
classes of enhanced elliptic curves over $S$. Here, we say that two enhanced elliptic curves 
$(E_{1},C_{1})$ and $(E_{2},C_{2})$ over $S$ are isomorphic if there is an isomorphism 
$E_{1}\stackrel{\simeq}{\rightarrow} E_{2}$ of elliptic curves over $S$ that maps $C_{1}$ to $C_{2}$. 
We denote by $[E,C]$ the $S$-valued point of $Y$ defined by an enhanced elliptic curve $(E,C)$ over $S$. For $S=\Spec \mathbb{C}$, we have the usual isomorphism
\begin{equation}
\Gamma_{0}(N)\backslash\h \stackrel{\simeq}{\longrightarrow}Y(\mathbb{C})\qquad\tau\mapsto[E_{\tau},C_{\tau}]\label{eq:HisoY}
\end{equation}
where $\h$ is the upper half-plane, $\ds \Gamma_{0}(N)=\left\{ {\mtwo a b c d } \in \SL_{2}(\mathbb{Z}) \colon c\equiv0\bmod N\right\} $ acts on $\h$ by $\ds {\mtwo a b c d} \cdot \tau=\frac{a\tau+b}{c\tau+d}$, and $E_{\tau}=\mathbb{C}/\left\langle 1,\tau\right\rangle $ with
$C_{\tau}=\left\langle N^{-1},\tau\right\rangle /\left\langle 1,\tau\right\rangle $.

\subsection{Isogeny classes}
Fix an elliptic curve $\cE$ over a base $S$. Let $\mathcal{I}(\mathcal{E}_{\star})$ be the contravariant functor that assigns to an $S$-scheme $T$ the set \emph{$\mathcal{I}(\mathcal{E}_{T})$} of isomorphism classes of triples $(E,C,\phi)$ where $(E,C)$ is
an enhanced elliptic curve over $T$ and where $\phi$ is an invertible
element of $\Hom_{T}(E/C,\mathcal{E}_{T})\otimes\mathbb{Q}$. 
An isomorphism between two such triples $(E_{1},C_{1},\phi_{1})$
and $(E_{2},C_{2},\phi_{2})$ is an isomorphism of enhanced elliptic
curves $\theta \colon (E_{1},C_{1})\rightarrow(E_{2},C_{2})$ such that $\phi_{2}\circ\overline{\theta}=\phi_{1}$, where $\overline{\theta} \colon E_{1}/C_{1}\rightarrow E_{2}/C_{2}$ is the
induced isomorphism. The group $\Aut_{T}^{0}(\mathcal{E}_{T})$ of
invertible elements in $\End_{T}^{0}(\mathcal{E}_{T}) = \End_T(\mathcal{E}_T) \otimes \Q$ 
acts on $\mathcal{I}(\mathcal{E}_{T})$ by 
$$
\sigma\cdot(E,C,\phi)=(E,C,\sigma\circ\phi).
$$ 
If $s \in S$ is a geometric point, the map $(E,C,\phi)\mapsto[E,C]$ yields a bijection
\[
\Aut_{s}^{0}(\mathcal{E}_{s})\backslash\mathcal{I}(\mathcal{E}_{s})\simeq Y(\mathcal{E}_{s})=\left\{ x\in Y(s)\vert x=[E,C]\mbox{ s.t. }\Hom_{s}^{0}(E,\mathcal{E}_{s})\neq0\right\} \subset Y(s).
\]

\subsection{Lattices}
For a geometric point $s$ of $S$ and a prime number $p$ let $T_{p}(\mathcal{E}_{s})$ be the $p$-adic Tate module of $\mathcal{E}_{s}$ if $\chr(s)\neq p$ and the covariant Dieudonn\'e crystal
of the $\ell$-divisible group $\mathcal{E}_{s}[\ell^{\infty}]$ if $p=\ell=\chr(s)$.
Let also $\widehat{T}(\mathcal{E}_{s})=\prod_{p}T_{p}(\mathcal{E}_{s})$
and $\widehat{V}(\mathcal{E}_{s})=\widehat{T}(\mathcal{E}_{s})\otimes\mathbb{Q}$.
If $\chr(s)=0$, a lattice in $\widehat{V}(\mathcal{E}_{s})$
is any $\widehat{\mathbb{Z}}$-submodule that is commensurable with
$\widehat{T}(\mathcal{E}_{s})$. If $\chr(s)=\ell$, a lattice
in $\widehat{V}(\mathcal{E}_{s})=\widehat{V}^{(\ell)}(\mathcal{E}_{s})\times V_{\ell}(\mathcal{E}_{s})$ is a submodule of the form $\widehat{T}^{(\ell)}\times T_{\ell}$ where 
$\widehat{T}^{(\ell)}$ is a $\widehat{\mathbb{Z}}^{(\ell)}$-submodule of $\widehat{V}^{(\ell)}(\mathcal{E}_{s})$ that is commensurable with $\widehat{T}^{(\ell)}(\mathcal{E}_{s})$, and
where $T_{\ell}$ is a subcrystal of $V_{\ell}(\mathcal{E}_{s})$. Here, 
$$
\widehat{\mathbb{Z}}^{(\ell)} = \prod_{p \ne \ell} \Z_{p} \qquad \widehat{V}^{(\ell)}(\mathcal{E}_{s}) = \prod_{p \ne \ell} V_p(\mathcal{E}_s) \qquad \ds \widehat{T}^{(\ell)}(\mathcal{E}_{s}) = \prod_{p \ne \ell} T_p(\mathcal{E}_s).
$$ 

Let $\mathcal{L}(\mathcal{E}_{s})$
be the set of pairs of lattices $(\widehat{T}_{1},\widehat{T}_{2})$ in $\widehat{V}(\mathcal{E}_{s})$ such that $\widehat{T}_{1}\subset\widehat{T}_{2}$ and $\widehat{T}_{2}/\widehat{T}_{1}\simeq\mathbb{Z}/N\mathbb{Z}$ (thus if $\chr(s)=\ell$, $\widehat{T}_{1}$ and $\widehat{T}_{2}$ share the same $\ell$-component, since $\ell \nmid N$). For $(E,C,\phi)\in\mathcal{I}(\mathcal{E}_{s})$,
we have morphisms
\[
\widehat{T}(E)\stackrel{\mathrm{can}}{\longrightarrow}\widehat{T}(E/C)\subset\widehat{V}(E/C)\stackrel{\phi}{\longrightarrow}\widehat{V}(\mathcal{E}_{s})
\]
and the resulting map 
\[
(E,C,\phi)\mapsto\left(\phi\circ\mathrm{can}\left(\widehat{T}(E)\right),\phi\left(\widehat{T}(E/C)\right)\right)
\]
yields a bijection $\mathcal{I}(\mathcal{E}_{s})\simeq\mathcal{L}(\mathcal{E},s)$.
Thus also 
\[
Y(\mathcal{E}_{s})\simeq\Aut_{s}^{0}(\mathcal{E}_{s})\backslash\mathcal{L}(\mathcal{E},s).
\]

\subsection{CM points as special points}
Let $K$ be a quadratic imaginary field and let 
$K\hookrightarrow\mathbb{C}$ be a fixed embedding. An elliptic curve $E$ over a 
field $F$ is said to have complex multiplication by $K$ if $\End_{F}^{0}(E)$ 
is isomorphic to $K$. If $F$ is a subfield of $\mathbb{C}$, we
normalize the isomorphism $K\simeq\End_{F}^{0}(E)$ by requiring that
$K$ acts on the tangent space $\Lie E(\mathbb{C})$ through our fixed
embedding $K\hookrightarrow\mathbb{C}$. The conductor of $E$ is
the unique positive integer $c(E)$ such that $\End_{F}(E)\simeq\mathbb{Z}+c(E)\mathcal{O}_{K}$
inside $\End_{F}^{0}(E)\simeq K$. A complex point $x\in Y(\mathbb{C})$
is said to have complex multiplication by $K$ if $x=[E,C]$ for some
elliptic curve $E$ over $\mathbb{C}$ with complex multiplication
by $K$. The elliptic curve $E/C$ then also has complex multiplication by $K$. The
fine and coarse conductors of $x$ are respectively equal to 
\[
\bc_{f}(x)=(c(E),c(E/C))\in\mathbb{N} \times \mathbb{N}\quad\mbox{and}\quad \bc_{g}(x)=\mathrm{lcm}(c(E),c(E/C))\in\mathbb{N}.
\]
We denote by $\mathcal{CM}_{K}$ the subset of $Y(\mathbb{C})$ thus
defined and refer to its elements as CM points. Note that the bijection~(\ref{eq:HisoY}) 
restricts to 
\begin{equation}
\Gamma_{0}(N)\backslash \left ( \h \cap K \right ) \stackrel{\simeq}{\longrightarrow}\mathcal{CM}_{K}\qquad\tau\mapsto[E_{\tau},C_{\tau}].
\label{eq:HKisoCM}
\end{equation}
Let $\tau\in\h\cap K$ satisfy the quadratic equation $A\tau^{2}+B \tau+C = 0$ where $A > 0$, $A, B, C \in \Z$ and $(A, B, C) = 1$.  One can calculate the fine conductor of $[E_\tau, C_\tau]$ as follows: if $\ds \Delta(\tau) = B^2 - 4AC$ is the discriminant modular function and if $D < 0$ is the fundamental discriminant of $K$ then (see, e.g., 
\cite[\S 7]{cox:book})
\[
\bc_{f}[E_{\tau},C_{\tau}]= \left ( \sqrt{\left | \frac{\Delta(\tau)}{D} \right | }, 
\sqrt{ \left | \frac{\Delta(N\tau)}{D} \right | }\right ) .  
\]
The point $[E_\tau, C_\tau]$ is a Heegner point if and only if $\Delta(\tau) = \Delta(N\tau)$. It is not hard to show that the latter is equivalent to $A \mid N$ and $(A/N, B, NC) = 1$. 

\subsection{CM points as isogeny class}
All elliptic curves over $\C$ with complex multiplication
by $K$ are isogenous. Fix one such curve $\mathbb{E}$.
Then (notation as above) 
\begin{equation}
\mathcal{CM}_{K}=Y(\mathbb{E})\stackrel{\simeq}{\longleftarrow}K^{\times}\backslash\mathcal{I}(\mathbb{E})\stackrel{\simeq}{\longrightarrow}K^{\times}\backslash\mathcal{L}(\mathbb{E},\mathbb{C}). 
\label{eq:CMandLat}
\end{equation}
The fine conductor corresponds to 
\[
\bc_{f}(\widehat{T}_{1},\widehat{T}_{2})=\left(c(\widehat{T}_{1}),c(\widehat{T}_{2})\right)
\]
where for a lattice $\widehat{T}$ in $\widehat{V}(\mathbb{E})$, $c(\widehat{T})$ is the conductor of the quadratic order 
$$
\cO_{c(\widehat{T})} = \{s\in K \colon s\widehat{T}\subseteq \widehat{T}\}.
$$ 
The $\cO_{c(\widehat{T})}$-module $\widehat{T}$ is thus free of rank one. 

\subsection{The Galois action on CM points}
Let $K^{ab}$ be the maximal abelian extension of $K$ inside $\mathbb{C}$,
and let $\mathrm{Art}_{K} \colon \widehat{K}^{\times}\rightarrow\Gal(K^{ab}/K)$
be the reciprocal of the usual Artin reciprocity. In other words, $\mathrm{Art}_K$ sends uniformizers to geometric Frobenii \cite[p.90]{milne:cm}. The main theorem of complex multiplication then 
says (see \cite[Thm.3.10]{milne:cm2} or \cite[Thm.9.10]{milne:cm}): 
\begin{quote}
For any $\sigma\in\Gal(\mathbb{C}/K)$ and any $s\in\widehat{K}^{\times}$
such that $\sigma\vert K^{ab}=\mathrm{Art}_{K}(s)$ in $\Gal(K^{ab}/K)$,
there exists a unique isogeny $\lambda \colon \mathbb{E} \ra \sigma\mathbb{E}$
such that for all $y \in\widehat{V}(\mathbb{E})$, $\lambda(sy) = \sigma y$
in $\widehat{V}(\sigma\mathbb{E})$. 
\end{quote}
Suppose now that $x=[E,C]$ belongs to $\mathcal{CM}_{K}$, corresponding
to a pair of lattices $(\widehat{T}_{1},\widehat{T}_{2})$ in $\mathcal{L}(\mathbb{E},\mathbb{C})$
obtained from $(E,C)$ by choosing s non-zero $\phi \colon E/C \ra \mathbb{E}$.
Fix $\sigma$, $s$ and $\lambda$ as above. Then $\sigma x=[\sigma E,\sigma C]$
belongs to $\mathcal{CM}_{K}$, and it corresponds to the pair of
lattices $(s\widehat{T}_{1},s\widehat{T}_{2})$ for the choice 
$$
\phi'=\lambda^{-1}\circ\sigma\phi \colon \sigma E/\sigma C \ra \mathbb{E}.
$$ 
The bijection~(\ref{eq:CMandLat}) therefore maps the action of 
$\sigma\in\Gal(\mathbb{C}/K)$ on $\mathcal{CM}_{K}$ to left multiplication by 
$s\in\widehat{K}^{\times}$ on $\mathcal{L}(\mathbb{E},\mathbb{C})$. It follows that the field of definition of $x \in \mathcal{CM}_{K}$
is equal to the ring class field $K[\bc_{g}(x)]$ specified by the coarse
conductor $\bc_{g}(x)$ of $x$. In fact, it is well-known that the corresponding point 
$x\in Y(K[\bc_{g}(x)])$ can be represented by an enhanced elliptic curve $(E,C)$ over 
$K[\bc_{g}(x)]$.

\subsection{The good reduction of CM points}

Let $\overline{\mathbb{Q}}$ be the algebraic closure of $\mathbb{Q}$
inside $\mathbb{C}$. Let $\ell \nmid N$ be a prime. Fix an embedding 
$\iota_\ell \colon \overline{\mathbb{Q}} \hookrightarrow\overline{\mathbb{Q}}_{\ell}$. Note that $\iota_\ell$ determines a valuation ring $\overline{\mathcal{O}}\subset\overline{\mathbb{Q}}$ whose residue field $\overline{\mathbb{F}}_\ell$ is an algebraic closure of $\mathbb{F}_{\ell}$ and a place $\lambar$ of $\Qbar$ over $\ell$. Let $K[\infty]\subset K^{\ab}$ be the union of all ring class fields $K[c]$. For each $c$ (including $c = \infty$), let $\lambda_c$ be the place of $K[c]$ below $\lambar$. Let $\cO[c]$ be the valuation ring of $\lambda_c$ inside $K[c]$ and let $\F[c]$ be its residue field. 

There are various equivalent approaches to the reduction theory of CM points at $\lambda$:  

\paragraph{1. Reduction via properness.} Let $X_{/\Spec\mathbb{Z}[1/N]}$ be the smooth and proper compactification of $Y$ constructed by Deligne--Rapoport \cite{deligne-rapoport} when $N$ is prime and by Katz--Mazur \cite{katzmazur} in general. 
One gets a reduction map 
$$
\red \colon Y(K[c]) \hra X(K[c]) \simeq X(\cO[c]) \xra{\red_{\lambda_c}} X(\F[c]),  
$$
where the bijection $X(K[c]) \simeq X(\cO[c])$ follows from the valuative criterion of 
properness. 

\paragraph{2. Reduction via N\'eron models.} For a point $x\in\mathcal{CM}_{K}$ 
with coarse conductor $c=\bc_{g}(x)$ write $x=[E,C]$ for some enhanced elliptic curve 
$(E,C)$ over $K[c]$ with complex multiplication by $K$. Suppose first that $E$ has
good reduction at $\lambda_c$. Then $(E,C)$ extends to an enhanced elliptic
curve $(\mathcal{E},\mathcal{C})$ over $\mathcal{O}[c]$ where
$\mathcal{E}_{/\cO[c]}$ is the N\'eron model of $E$. The special fiber of the 
latter gives a point $\mathrm{red}(x)=[\mathcal{E}_{\mathbb{F}[c]},\mathcal{C}_{\mathbb{F}[c]}]$ in $Y(\mathbb{F}[c])$. If $E$ does not have good reduction at 
$\lambda_c$, we know by \cite[Thm.7]{serre-tate} that $E$ 
acquires good reduction at $\lambar$ (and indeed everywhere) after a suitable
cyclic extension of $K[c]$. We may thus define $\mathrm{red}(x)$ as the point corresponding 
to the special fiber of the N\'eron model of the base change of $(E,C)$ to such an
extension. This yields a well-defined reduction map
\[
\mathrm{red} \colon \mathcal{CM}_{K}\rightarrow Y(\overline{\mathbb{F}}_\ell).
\]
The above two constructions give the same map on $\mathcal{CM}_{K}$ with
values in $X(\Fbar_\ell)$. Since $Y(\F[\infty])=X(\mathbb{F}[\infty]) \cap Y(\Fbar_\ell)$
in $X(\overline{\mathbb{F}}_\ell)$, we have a well-defined map
\begin{equation}
\mathrm{red} \colon \mathcal{CM}_{K}\rightarrow Y(\mathbb{F}[\infty]).\label{eq:reducDef1}
\end{equation}
For a more explicit construction of this map, we can also choose an
elliptic curve $\mathbb{E}$ over $K[\infty]$ with complex multiplication
by $K$ and good reduction at $\lambda_\infty$. Such a curve exists by the elementary 
theory of complex multiplication and by \cite[Cor.1]{serre-tate}. We then
reduce $\mathcal{CM}_{K}$ as the isogeny class $Y(\mathbb{E})$,
as we shall now explain. 

\paragraph {3. Reduction maps for isogeny classes.}
Let $S=\Spec\overline{\mathcal{O}}$ with geometric points $g=\Spec \overline{\mathbb{Q}}$
 and $s=\Spec\overline{\mathbb{F}}_\ell$. Let $\mathcal{E}_{/S}$ be an elliptic curve. We will eventually take $\mathcal{E}$
to be the N\'eron model of an elliptic curve $\mathbb{E}$ with complex
multiplication by $K$, but in what follows we will not make this assumption. The theory of N\'eron models implies that 
restriction from $S$ to $g$ yields bijections 
\[
\mathcal{I}(\mathcal{E})\stackrel{\simeq}{\longrightarrow}\mathcal{I}(\mathcal{E}_{g})\qquad\mbox{and}\qquad\End_{S}(\mathcal{E})\stackrel{\simeq}{\longrightarrow}\End_{g}(\mathcal{E}_{g}).
\]
Restriction from $S$ to $s$ on the other hand gives 
\[
\mathcal{I}(\mathcal{E})\rightarrow\mathcal{I}(\mathcal{E}_{s})\qquad\mbox{and}\qquad\End_{S}(\mathcal{E})\hookrightarrow\End_{s}(\mathcal{E}_{s}).
\]
We get reduction maps $\red \colon \cL(\cE, g) \ra \cL(\cE, s)$ (from left to right):  
\[
\begin{array}{ccccc}
\mathcal{L}(\mathcal{E},g) & \stackrel{\simeq}{\longleftarrow} & \mathcal{I}(\mathcal{E}) & \rightarrow & \mathcal{L}(\mathcal{E},s)\\
\uparrow\simeq &  & \parallel &  & \uparrow\simeq\\
\mathcal{I}(\mathcal{E}_{g}) & \stackrel{\simeq}{\longleftarrow} & \mathcal{I}(\mathcal{E}) & \rightarrow & \mathcal{I}(\mathcal{E}_{s})\\
\downarrow &  & \downarrow &  & \downarrow\\
Y(\mathcal{E}_{g})=\Aut_{g}^{0}(\mathcal{E}_{g})\backslash\mathcal{I}(\mathcal{E}_{g}) & \stackrel{\simeq}{\longleftarrow} & \Aut_{S}^{0}(\mathcal{E})\backslash\mathcal{I}(\mathcal{E}) & \rightarrow & \Aut_{s}^{0}(\mathcal{E}_{s})\backslash\mathcal{I}(\mathcal{E}_{s})=Y(\mathcal{E}_{s})\end{array}
\]
The map $\mathrm{red} \colon \mathcal{L}(\mathcal{E},g)\rightarrow\mathcal{L}(\mathcal{E},s)$
is induced by the natural isomorphism $\widehat{T}^{(\ell)}(\mathcal{E}_g)\stackrel{\simeq}{\longrightarrow}\widehat{T}^{(\ell)}(\mathcal{E}_s)$
between the Tate modules away from $\ell$, together with a map
\begin{equation}
\mathrm{red}_{\ell} \colon \mathcal{L}_{\ell}(\mathcal{E}, {g})\rightarrow\mathcal{L}_{\ell}(\mathcal{E},s)
\label{eq:redp}
\end{equation}
from the set $\mathcal{L}_{\ell}(\mathcal{E}, g)$ of $\mathbb{Z}_{\ell}$-lattices
in $V_{\ell}(\mathcal{E}_g)$ to the set $\mathcal{L}_{\ell}(\mathcal{E}, s)$
of crystals in $V_{\ell}(\mathcal{E}_s)$. For $X,Y\in\mathcal{L}_{\ell}(\mathcal{E},s)$,
we set 
\[
[X:Y]=\mathrm{length}(X/X\cap Y)-\mathrm{length}(Y/X\cap Y)
\]
where the length function is relative to the $\mathbb{Z}_{\ell}$-module
stucture if $u=g$ and to the $W$-module structure
if $u = s$ (here, $W := W(\Fbar_\ell)$ is the ring of Witt vectors of $\Fbar_\ell$). Then for all $X\in\mathcal{L}_{\ell}(\mathcal{E},g)$, 
\begin{equation}
\left[T_{\ell}(\mathcal{E}_g):X\right]=\left[T_{\ell}(\mathcal{E}_s) \colon \mathrm{red}_{\ell}(X)\right]\label{eq:DegComp}
\end{equation}
Indeed, we may choose a triple $(E,C,\phi)\in\mathcal{I}(\mathcal{E})$
such that \[
X=\phi\left(T_{\ell}((E/C)_g)\right)\quad\mbox{and}\quad\mathrm{red}_{\ell}(X)=\phi\left(T_{\ell}((E/C)_s)\right)\]
and then both sides of (\ref{eq:DegComp}) are equal to the exponent
of $\ell$ in the degree of $\phi$. 

\subsection{The supersingular case}
If $\mathcal{E}_{s}$ is a supersingular elliptic curve, then 
\[
\left(T_{\ell}(\mathcal{E}_s),F,V\right)\simeq\left(W^2, \pi_{\ell}\sigma,\pi_{\ell}\sigma^{-1}\right)\qquad\mbox{with}\qquad\pi_{\ell}={\mtwo 0 1 p 0} \in M_{2}(W)
\]
where $\sigma$ is the Frobenius automorphism of $W$. We thus obtain 
\[
\mathcal{L}_{\ell}(\mathcal{E}_s)\simeq\left\{ \pi_{\ell}^{i}W^2 \subset\mathcal{K}^{2} \colon i\in\mathbb{Z}\right\} \]
where $\mathcal{K}$ is the fraction field of $W$. In particular,
the map (\ref{eq:redp}) is uniquely determined by (\ref{eq:DegComp}).
Moreover, the endomorphism ring 
\[
\End\left(T_{\ell}(\mathcal{E}_s),F,V\right)\simeq\left\{ x\in M_{2}(W) \colon \pi\sigma(x)\pi^{-1}=x\right\} 
\]
is the maximal $\mathbb{Z}_{\ell}$-order $\mathcal{R}$ of a non-split
quaternion algebra $\mathcal{B}=\mathcal{R}\otimes_{\mathbb{Z}_{\ell}}\mathbb{Q}_{\ell}$
over $\mathbb{Q}_{\ell}$ and $\mathcal{B}^{\times}\simeq\mathcal{R}^{\times}\times\pi_{\ell}^{\mathbb{Z}}$
acts transitively on $\mathcal{L}_{\ell}(\mathcal{E},z)$. 
\begin{rem}
The reduction map (\ref{eq:redp}) is more difficult to analyze
when $\mathcal{E}_{s}$ is an ordinary elliptic curve, especially
when $\mathcal{E}_{g}$ does \emph{not }have complex multiplication. 
\end{rem}

\subsection{Matrices}
We choose a subgroup $T'$ of $T=\H_{1}(\mathcal{E}(\mathbb{C}),\mathbb{Z})$
such that $T/T'\simeq\mathbb{Z}/N\mathbb{Z}$. We put $V=T\otimes\mathbb{Q}$,
$B=\End_{\mathbb{Q}}(V)$ and 
\[
R=\left\{ x\in B \colon xT'\subset T'\mbox{ and }xT\subset T\right\} ,
\]
an Eichler order of level $N$ in $B\simeq M_{2}(\mathbb{Q})$. This
gives rise to identifications 
\[
\widehat{T}(\mathcal{E}_g)\simeq\widehat{T}(\mathcal{E},\mathbb{C})\simeq\widehat{T},\quad\widehat{V}(\mathcal{E}_g)\simeq\widehat{V}(\mathcal{E},\mathbb{C})\simeq\widehat{V}\quad\mbox{and}\quad\widehat{B}^{\times}/\widehat{R}^{\times}\simeq\mathcal{L}(\mathcal{E}, g), 
\]
where the last map sends $b$ to $(b\widehat{T}',b\widehat{T})$.
Suppose that $\mathcal{E}_{s}$ is supersingular and let $B_{\{\ell\}}=\End_{s}^{0}(\mathcal{E}_{s})$,
a definite quaternion algebra over $\mathbb{Q}$ with $\mathrm{Ram}_{f}(B_{\{\ell\}})=\{\ell\}$.
For $p\neq \ell$, the left action of $B_{\{\ell\},p}$ on $V_{p}(\mathcal{E}_s)\simeq V_{p}(\mathcal{E},g)\simeq V_{g}$
yields an isomorphism $\theta_{p}:B_{p}\simeq B_{\{\ell\},p}$,
and the left action of $B_{\{\ell\},\ell}$ on $V_{\ell}(\mathcal{E}_s)$ yields
an isomorphism $\theta_{\ell} \colon \mathcal{B}\simeq B_{\{\ell\},\ell}$ with $\mathcal{B}$
as above. Put $\mathrm{red}(\widehat{T}',\widehat{T})=(\widehat{T}'_{s},\widehat{T}_{s})$
and 
\[
R_{\{\ell\}}=\left\{ x\in B_{\{\ell\}} \colon x\widehat{T}'_{s}\subset\widehat{T}'_{s}\mbox{ and }x\widehat{T}_{s}\subset\widehat{T}_{s}\right\}.
\]
Thus $\theta_{p}(R_{p})=R_{\{\ell\},p}$ for all $p\neq \ell$,
and $\theta_{\ell}(\mathcal{R})=R_{\{\ell\},\ell}$ with $\mathcal{R}$ as
above. The map $b\mapsto(b\widehat{T}'_{s},b\widehat{T}_{s})$ yields
an identification $\widehat{B}_{\{\ell\}}^{\times}/\widehat{R}_{\{\ell\}}^{\times}\simeq\mathcal{L}(\mathcal{E},s)$,
and the reduction map 
\[
\widehat{B}^{\times}/\widehat{R}^{\times}\simeq\mathcal{L}(\mathcal{E},g)\stackrel{\mathrm{red}}{\longrightarrow}\mathcal{L}(\mathcal{E},s)\simeq\widehat{B}_{\{\ell\}}^{\times}/\widehat{R}_{\{\ell\}}^{\times}
\]
sends $\widehat{b}\widehat{R}^{\times}=\prod b_{p}R_{p}^{\times}$
to $\widehat{b}'\widehat{R}_{\{\ell\}}^{\times}=\prod b'_{p}R_{\{\ell\},p}^{\times}$
where $b'_{p}=\theta_{p}(b_{p})$ for $p\neq \ell$ and $b'_{\ell}=\theta_{\ell}(\pi_{\ell})^{v_{\ell}}$
for $v_{\ell}=\mathrm{ord}_{\ell}(\mathrm{nr}(b_{\ell}))$. We finally obtain a reduction map
\[
\Aut_{g}^{0}(\mathcal{E}_{g})\backslash\widehat{B}^{\times}/\widehat{R}^{\times}\simeq Y(\mathcal{E}_{g})\stackrel{\mathrm{red}}{\longrightarrow}Y(\mathcal{E}_{s})\simeq B_{\{\ell\}}^{\times}\backslash\widehat{B}_{\{\ell\}}^{\times}/\widehat{R}_{\{\ell\}}^{\times}\]
where $\End_{g}^{0}(\mathcal{E}_{g})$ embeds in $B$ through its
action on $V=\H_{1}(\mathcal{E}_{g}(\mathbb{C}),\mathbb{Q})$.

\subsection{The supersingular reduction of CM points}

We now assume that $\ell$ is inert in $K$, and let $\mathcal{E}$
be the N\'eron model over $\overline{\mathcal{O}}$ of an elliptic curve
with complex multiplication by $K$, i.e. $\End_{g}^{0}(\mathcal{E}_{g})=K$.
Then $\mathcal{E}_{s}$ is indeed supersingular, $Y(\mathcal{E}_{g})=\mathcal{CM}_{K}$
and $Y^{ss}(\mathcal{E}_{s})=Y^{ss}(\overline{\mathbb{F}}_\ell)$, the
set of supersingular points in $Y(\overline{\mathbb{F}}_\ell)$. We have
now identified the geometric reduction map
\begin{equation}
\mathrm{red} \colon \mathcal{CM}_{K}\rightarrow Y^{ss}(\overline{\mathbb{F}}_\ell)
\label{eq:FinalRed}
\end{equation}
with the adelic reduction map which we had previously considered.

\begin{rem}
The surjectivity of $(\ref{eq:FinalRed}$) implies that $Y^{ss}(\overline{\mathbb{F}}_\ell)\subset Y(\mathbb{F}[\infty])$.
On the other hand, class field theory shows that $\mathbb{F}[\infty]\simeq\mathbb{F}_{\ell^{2}}$.
We thus retrieve the well-known fact that $Y^{ss}(\overline{\mathbb{F}}_\ell)\subset Y(\mathbb{F}_{\ell^{2}})$. 
\end{rem}

\paragraph{The main correspondence between Heegner points and optimal embeddings.}   

Let now $c$ be a positive integer satisfying $(c, \ell N) = 1$ and let $\cO_c$ be the corresponding order in $K$. As a consequence of the above identifications, our Corollary~\ref{cor:main} implies the following: 
 
\begin{cor}\label{cor:corresp}
Let $s \in Y^{ss}(\Fbar_\ell)$ be a supersingular point. Choose $[\widetilde{E}, \widetilde{C}] = s$ and define 
$$
R_s = \End(\widetilde{E}, \widetilde{C}) = \left \{ \alpha \in \End(\widetilde{E}) \colon \alpha(\widetilde{C}) \subset \widetilde{C}\right \}. 
$$
There is a one-to-one correspondence between the following two sets:  
\[
\begin{array}{rcl}
\left \{
\begin{array}{c}
\textrm{Points } x \in \mathcal{CM}_K\textrm{ on } Y \\ 
\textrm{ of conductor }c
\textrm{ reducing to } s 
\end{array}
\right \} 
& 
\Longleftrightarrow
&
\left \{ 
\begin{array}{c}
R_s^\times-\textrm{conjugacy classes of} \\
\textrm{conjugate pairs of}\\ 
\textrm{optimal embeddings } \cO_{c} \hra R_s
\end{array}
\right \}.
\end{array}
\]
\end{cor}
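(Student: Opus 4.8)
The plan is to deduce Corollary~\ref{cor:corresp} from Corollary~\ref{cor:main} and the geometric dictionary assembled in the rest of Section~\ref{sec:apps}, by specializing to $F=\Q$, $B=M_2(\Q)$ (of level $N$), $S=S'=\{\ell\}$ (recall $\ell$ is inert, hence unramified and nonsplit, in $K$, so that $B_S=B_{\{\ell\}}$), and fine conductor $(\mathfrak c',\mathfrak c'')=(c,c)$. Since $(c,\ell N)=1$, the prime-to-$S$ parts are again $(c,c)$, the congruence $v(\mathfrak c')-v(\mathfrak c'')\equiv v(\mathfrak n)\pmod 2$ holds trivially at $v=\ell$, and the multiplicity in Corollary~\ref{cor:main} is
\[
\kappa=\left|\Gal(K[c]/K[c])\right|\cdot N_\ell(0,0,0)=1
\]
by Lemma~\ref{lem:LocComp} (case (2), with $n'=n''=\delta=0$). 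Hence the correspondence \eqref{eq:corr} is an honest bijection, and it remains to identify its two sides with the two sets in Corollary~\ref{cor:corresp}.

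For the left-hand side: under the identifications \eqref{eq:HKisoCM} and \eqref{eq:CMandLat}, $\mathcal{CM}(G,H)\cong\mathcal{CM}_K$ as $\Gal_K$-sets; the adelic reduction map $\red_S$ of \eqref{eq:redmap} becomes the geometric supersingular reduction \eqref{eq:FinalRed} $\red\colon\mathcal{CM}_K\to Y^{ss}(\overline{\F}_\ell)$ (this is exactly the content of the last sentence of the subsection on the supersingular reduction of CM points, $s$ being supersingular and $\ell$ inert); and, because $(c,N)=1$, the first Remark following Corollary~\ref{cor:main} gives $\bc_{f}^{-1}(c,c)=\bc_{g}^{-1}(c)$, i.e.\ the set of $x=[E,C]\in\mathcal{CM}_K$ with $c(E)=c(E/C)=c$, which is precisely the set of CM points ``of conductor $c$''. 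So $\red_S^{-1}(s)\cap\bc_{f}^{-1}(c,c)$ is the first set in Corollary~\ref{cor:corresp}.

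For the right-hand side: fix $g\in G_S(\Af)=\widehat B_{\{\ell\}}^\times$ above $s$. By the computation of the fiber for $B_S$, $\pi^{-1}(s)\cong R_{S,g}^\times\backslash\Hom_{F-\alg}(K,B_S)$, under which $\bc_{S,\f}^{-1}(c,c)=\bc_{S,\g}^{-1}(c)$ is the locus of \emph{optimal} embeddings $\cO_c\hookrightarrow R_{S,g}$, and $\psi_{S'}^g$ is read off from the specialization $\mathrm{sp}(j)\in\Hom_{\F(\ell)}(\mathbb{K}(\ell),\mathbb{B}_S(\ell))$, a torsor under $\Gal(\mathbb{K}(\ell)/\F(\ell))\cong\Z/2\Z$ since $\ell$ is inert in $K$. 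The identifications in the subsection on matrices realize $B_S=B_{\{\ell\}}$ as $\End_s^0(\mathcal E_s)$ and the Eichler order $R_{S,g}$ attached to $g$ as $\End(\widetilde E)\cap\End(\widetilde E/\widetilde C)=\End(\widetilde E,\widetilde C)=R_s$, so optimal embeddings $\cO_c\hookrightarrow R_{S,g}$ are exactly optimal embeddings $\cO_c\hookrightarrow R_s$. Finally, the nontrivial element of $\Gal(K/\Q)$ reduces modulo $\lambda_\ell$ to the nontrivial automorphism of $\mathbb{K}(\ell)$, so the conjugate embedding $\bar\jmath=\jmath\circ(\text{conjugation})$ satisfies $\psi_{S'}^g(\bar\jmath)=\psi_{S'}^g(\jmath)+1$; since $\psi_{S'}^g$ is $R_{S,g}^\times$-invariant this forces $[\jmath]\neq[\bar\jmath]$, each conjugate pair $\{[\jmath],[\bar\jmath]\}$ meets the fiber $\psi_{S'}^{-1}(e_{S'}(c,c))$ in exactly one class (for any value of $e_{S'}(c,c)$), and $[\jmath]\mapsto\{[\jmath],[\bar\jmath]\}$ defines a bijection from $\pi^{-1}(s)\cap\bc_{S,\f}^{-1}(c,c)\cap\psi_{S'}^{-1}(e_{S'}(c,c))$ onto the set of $R_s^\times$-conjugacy classes of conjugate pairs of optimal embeddings $\cO_c\hookrightarrow R_s$ (this is also recorded in the Remark following the fiber computation for $B_S$). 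Composing these three identifications with the bijection \eqref{eq:corr} proves the Corollary.

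I expect the bulk of the work --- though not the difficulty --- to lie in the first two of these identifications: checking that the geometric supersingular reduction really does agree with the adelic $\red_S=\pi\circ\theta_S$ under the dictionary of Section~\ref{sec:apps} (where the choices of the local isomorphisms $\theta_p$, the isomorphism $\theta_\ell\colon\mathcal B\cong B_{\{\ell\},\ell}$ and the uniformizer $\pi_\ell$ all intervene), and that the adelic Eichler order $R_{S,g}$ attached to a representative $g$ of $s$ matches the geometrically defined $\End(\widetilde E,\widetilde C)$ on the nose. Granting those compatibilities --- all already in place in the preceding subsections --- the only genuinely new ingredient is the short torsor computation above showing that fixing a single value of the sign invariant $\psi_{S'}$ is the same as passing to conjugate pairs of optimal embeddings; this is exactly where the hypothesis that $\ell$ is inert (rather than ramified) in $K$ is used.
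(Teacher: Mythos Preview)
Your proposal is correct and follows the same approach as the paper, which simply asserts that Corollary~\ref{cor:corresp} follows from Corollary~\ref{cor:main} together with the identifications assembled in Section~\ref{sec:apps}; you have spelled out those identifications and the specialization $F=\Q$, $B=M_{2}(\Q)$, $S=S'=\{\ell\}$, $(\mathfrak c',\mathfrak c'')=(c,c)$ with $\kappa=1$ exactly as intended. Your handling of the sign invariant via conjugate pairs is precisely the content of the first Remark following the fiber computation for $B_{S}$.
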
 
 
\begin{rem}
This was precisely the correspondence needed in \cite{jetchev-kane} to translate the equidistribution question for Heegner points to a question about optimal embeddings. It is shown in \cite[\S4.1]{jetchev-kane} that the latter relates to counting primitive representations of integers by ternary quadratic forms. 
For $c = 1$, the above correspondence is known as Deuring lifting theorem (see \cite{deuring}) and has been subsequently refined (as a correspondence) by Gross and Zagier \cite[Prop.2.7]{gross-zagier:singular}. 
\end{rem} 


\begin{rem}
The left-to-right map in Corollary~\ref{cor:corresp} is rather natural.
Let $(E,C)$ be an enhanced elliptic curve over $\overline{\mathbb{Q}}$
with complex multiplication by $K$ and coarse conductor $c$. Extend $(E, C)$ 
to an enhanced elliptic curve $(\mathcal{E},\mathcal{C})$ over
$\overline{\mathcal{O}}$ and suppose that $(\mathcal{E}_{s},\mathcal{C}_{s})\simeq(\widetilde{E},\widetilde{C})$.
A choice of isomorphisms $(\mathcal{E}_{s},\mathcal{C}_{s})\stackrel{\simeq}{\longrightarrow}(\widetilde{E},\widetilde{C})$ and $K\stackrel{\simeq}{\longrightarrow}\mathrm{End}^{0}(\mathcal{E},\mathcal{C})$ yields an embedding $\mathcal{O}_{c}\hookrightarrow R_{s}$,  
and the resulting $R_{s}^{\times}$-conjugacy class of pairs of conjugate
embeddings does not depend upon these two choices. Since
$\ell\nmid c$, it is still fairly straightforward to verify that the embeddings
thus obtained are optimal. What is not obvious is that this construction gives a one-to-one correspondence. This is precisely what we establish in our Theorem~\ref{thm:main} in a greater generality for quaternion algebras. 
\end{rem}

\begin{acknowledgements}
We would like to thank Dick Gross, David Jao, Ben Kane, Philippe Michel, Ken Ribet and Nicolas Templier for helpful discussions.
\end{acknowledgements}

\bibliographystyle{amsalpha}
\bibliography{biblio-math}

\end{document}